\providecommand{\arxiv}[2][]{\href{http://www.arXiv.org/abs/#2}{arXiv:#2}}
\newtheorem{theorem}{Theorem}
\newtheorem{corollary}{Corollary}
\newtheorem{proposition}{Proposition}  
\newtheorem{lemma}{Lemma} 
\theoremstyle{remark}
\newtheorem{remark}{Remark} 
\newtheorem{example}{Example}
\title[The tropical analogue of polar cones]{The tropical analogue of polar cones}
\author{St\'ephane Gaubert}
\thanks{The first author was partially 
supported by the joint CNRS-RFBR grant number 05-01-02807}
\address{INRIA and Centre de Math\'ematiques Appliqu\'ees, 
\'Ecole Polytechnique. Postal address: CMAP, \'Ecole Polytechnique, 
91128 Palaiseau C\'edex, France. T\`el: +33 1 69 33 46 13, 
Fax: +33 1 39 63 57 86}
\email{Stephane.Gaubert@inria.fr }
\author{Ricardo D. Katz}
\address{CONICET. Postal address: Instituto de Matem\'atica ``Beppo Levi'', 
Universidad Nacional de Rosario, Avenida Pellegrini 250, 2000 Rosario, 
Argentina.}
\email{rkatz@fceia.unr.edu.ar}
\keywords{Max-plus semiring, max-plus convexity, tropical convexity, 
extremal convexity, $\B$-convexity, duality, separation theorem, 
Farkas Lemma, semimodules, idempotent spaces}
\subjclass[2000]{primary: 52A01, secondary: 16Y60, 46A55}
\DeclareMathAlphabet{\mathbbold}{U}{bbold}{m}{n}
\newcommand{\eme}[1]{\hat{#1}}
\newcommand{\emb}[1]{\bar{#1}}
\newcommand{\wemb}[1]{\overline{#1}}
\newcommand{\zero}{\varepsilon}
\newcommand{\unit}{e}
\newcommand{\set}[2]{\{#1\mid\,#2\}}
\newcommand{\bracket}[2]{\langle#1\mid #2\rangle}
\newcommand{\cpol}[1]{\overline{\rm pol}(#1)}
\newcommand{\ccong}[1]{\overline{\rm cong}(#1)}
\newcommand{\opp}[1]{#1^{\rm op}}
\newcommand{\R}{\mathbb{R}}
\newcommand{\N}{\mathbb{N}}
\newcommand{\B}{\mathbb{B}}
\newcommand{\Rm}{\R\cup\{-\infty\}}
\newcommand{\rmax}{\R_{\max}}
\newcommand{\rmaxb}{\overline{\R}_{\max}}
\newcommand{\cX}{X}
\newcommand{\uvector}{{\rm e}}
\newcommand{\cK}{\mathcal{K}}
\begin{document}

\begin{abstract}
We study the max-plus or tropical analogue of the notion of polar:
the polar of a cone represents the set of linear inequalities satisfied 
by its elements. We establish an analogue of the bipolar theorem, which
characterizes all the inequalities satisfied by the elements
of a tropical convex cone. We derive this characterization from a new
separation theorem. We also establish variants of these results
concerning systems of linear equalities.
\end{abstract}

\maketitle

\section{Introduction}

Max-plus or tropical algebra refers to the analogue of classical
algebra obtained by considering the max-plus semiring $\rmax$,
which is the set $\Rm$ equipped with the addition 
$a\oplus b:=\max (a,b)$ 
and the multiplication $a\otimes b:=a+b$.

Max-plus {\em convex cones} or {\em semimodules} are 
subsets $V$ of $\rmax^n$ stable by max-plus linear 
combinations, meaning that 
\begin{align}\label{closure}
\max(\lambda+x,\mu+y)\in V
\end{align} 
for all $x,y\in V$ and $\lambda,\mu \in \rmax$. Here, $\lambda +x$
denotes the vector with entries $\lambda+x_i$ and the ``max''
of two vectors is understood entrywise.
Max-plus {\em convex sets} are subsets $V$ of $\rmax^n$ which 
satisfy~\eqref{closure} for all  $x,y\in V$ and $\lambda,\mu \in \rmax$ 
such that $\max (\lambda , \mu)=0$. 

Max-plus convex sets and cones have been studied
by several authors with different motivations. 

They were introduced by K. Zimmermann~\cite{zimmerman77}
when studying discrete optimization problems.

Another motivation arose
from the use by Maslov~\cite{maslov73}
of a max-plus analogue of the superposition
principle, which applies to the
solutions of a stationary Hamilton-Jacobi equation
whose Hamiltonian is convex in the adjoint variable.
Hence, the space of solutions has a structure analogous
to that of a linear space.
This was one of the motivations for the development
of the Idempotent Analysis~\cite{maslov92,maslovkolokoltsov95}
which includes the study, by Litvinov, Maslov, and Shpiz~\cite{litvinov00}
of the idempotent spaces,
of which max-plus cones are special cases. See~\cite{LS}
for a recent development.

A third motivation comes from the
algebraic approach to discrete event systems initiated
by Cohen, Dubois, Quadrat and Viot~\cite{cohen85a}. 
The spaces that appear in the study
of max-plus linear dynamical systems (reachable and observable spaces)
are naturally equipped with structures of 
max-plus cones~\cite{ccggq99}. This motivated the study of
max-plus cones or semimodules 
by Cohen, Quadrat, and the first author~\cite{cgq00,cgq02}. 
The results of~\cite{cgq02} were completed in a work with Singer~\cite{cgqs04}.
A development by the second author~\cite{katz05}
of the work on discrete event systems
includes a max-plus cone based synthesis
of controllers. See also~\cite{gk02a}.

The theory of max-plus convex cones, or tropical convex sets, has
recently been developed
in relation to tropical geometry. The tropical analogues of polytopes 
were considered by Develin and Sturmfels~\cite{sturmfels03}, 
who established a remarkable connection with phylogenetic analysis,
showing that tree metrics may be thought of in terms of tropical polytopes.
Tropical convexity was further studied by
Joswig~\cite{joswig04} and Develin and Yu~\cite{DevelinYu}. 
See~\cite{JSY} for a new development.

Another interest in the max-plus analogues of convex cones comes from 
abstract convex analysis~\cite{ACA}, as can be seen in the work of 
Nitica and Singer~\cite{NiticaSinger07a,NiticaSinger07b,NiticaSinger07c}.
Independently, 
Briec, Horvath and Rubinov~\cite{BriecHorvath04,BriecHorvRub05}
introduced and studied the notion of 
$\B$-convexity which is another name for max-plus convexity. 
See also~\cite{AdilovRubinov06}.

These motivations led to the proof of max-plus analogues
of classical results such as: Hahn-Banach theorem either in analytic 
(extension of linear forms) or geometric (separation) 
form~\cite{zimmerman77,shpiz,cgq00,sturmfels03,cgq02,BriecHorvRub05,cgqs04}, 
Minkowski theorem (generation of a closed convex set by its 
extreme points)~\cite{helbig,GK06a,BSS-07,minkowski}, and 
Helly and Carath\'eodory type 
theorems~\cite{BriecHorvath04,GaubertSergeev,Meunier}. 
However, some duality results are still
missing, because the idempotency of addition creates
difficulties which are absent in the classical theory. 
For example, in the classical theory there exists a 
bijective correspondence between the facets of a 
convex cone and the extreme rays of its polar cone. 
This kind of properties, relating internal and external 
representations of convex cones, are not yet well understood 
in the max-plus case. 

Indeed, classical duality theory relates a convex cone
with the set of linear inequalities that its points
satisfy. In the max-plus setting,  
the {\em polar} of a max-plus convex cone $V\subset \rmax^n$ 
can be defined as 
\[
V^\circ =\set{(f,g)\in (\rmax^n)^2}{\max_{i} \left( f_i + x_i \right) 
\leq \max_j \left( g_j + x_j \right) ,\; \forall x\in V} \; .
\]
Conversely, we may consider the set of points satisfying
a given set of inequalities. This leads us to define,
for all $W\subset (\rmax^n)^2$, a ``dual'' polar cone:
\[
W^\diamond = \set{x\in \rmax^n}{\max_{i} \left( f_i + x_i \right) 
\leq \max_j \left( g_j + x_j \right) ,\; \forall (f,g)\in W} \; .
\]
It follows from the separation theorem of~\cite{zimmerman77,shpiz,cgqs04} 
that a closed max-plus convex cone $V$ is characterized
by its polar cone:
\[
(V^\circ)^\diamond = V \; .
\]
It is natural to ask which subsets $W$ of $(\rmax^n)^2$
arise as polars of cones, or equivalently,
which subsets are of the form $W=(U^\diamond)^\circ$ 
for some $U\subset (\rmax^n)^2$.
The main result of this paper
is the following analogue of the bipolar theorem:
\begin{theorem}[Bipolar]\label{bipolar}
A subset $W\subset (\rmax^n)^2$ is the polar of a cone if, and only if, the
following conditions are satisfied:
\begin{itemize}
\item[(i)] $W$ is a closed max-plus convex cone,
\item[(ii)]  $f\leq g \implies (f,g)\in W$,
\item[(iii)] $(f,g)\in W\; ,\; (g,h)\in W \implies (f,h)\in W$. 
\end{itemize}
\end{theorem}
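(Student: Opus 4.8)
The plan is to take the obvious candidate $V:=W^\diamond$ and prove that $W=(W^\diamond)^\circ$; combined with the routine check that any polar $V^\circ$ satisfies (i)--(iii), this is the whole statement. For \emph{necessity} I would argue: if $W=V^\circ$, then $W$ is the solution set of the family of max-plus linear inequalities $\max_i(f_i+x_i)\le\max_j(g_j+x_j)$ indexed by $x\in V$; each of these defines a closed max-plus convex cone in $(\rmax^n)^2$, whence (i); condition (ii) holds because $f\le g$ makes each such inequality vacuously true; and (iii) is nothing but transitivity of $\le$ on $\R$. So everything comes down to showing that (i)--(iii) imply $W=(W^\diamond)^\circ$.

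The inclusion $W\subseteq(W^\diamond)^\circ$ is free: the operators $V\mapsto V^\circ$ and $W\mapsto W^\diamond$ form a Galois connection attached to the relation ``$\max_i(f_i+x_i)\le\max_j(g_j+x_j)$'' between $\rmax^n$ and $(\rmax^n)^2$, so $W\mapsto(W^\diamond)^\circ$ is a closure operator. (This also explains why (ii) and (iii) are needed: the closure operator automatically enforces both, so neither can be dropped.) The real content is the reverse inclusion, which unfolds into a separation statement: \emph{if $(f_0,g_0)\notin W$, there is $x\in W^\diamond$ with $\bracket{f_0}{x}>\bracket{g_0}{x}$}, where $\bracket{f}{x}:=\max_i(f_i+x_i)$. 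I would also record separately that $W^\diamond$ is always a closed max-plus convex cone, being an intersection of the closed cones $\{x:\bracket{f}{x}\le\bracket{g}{x}\}$.

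To produce this $x$ I would first invoke the classical max-plus separation theorem of \cite{zimmerman77,shpiz,cgqs04} for the closed max-plus convex cone $W$ sitting inside $\rmax^{2n}$: since $(f_0,g_0)\notin W$, there is a homogeneous half-space $\{(f,g):\bracket{a}{f}\oplus\bracket{b}{g}\le\bracket{c}{f}\oplus\bracket{d}{g}\}$ containing $W$ and not $(f_0,g_0)$, where each of the two separating vectors is written as a pair of vectors in $\rmax^n$. Next I would use (ii) to prune this inequality: testing it on the pairs $(\zero,h)\in W$ with $h\in\rmax^n$ arbitrary forces $b\le d$, so the term $\bracket{b}{g}$ is redundant and may be erased, while testing it on the pairs $(h,h)\in W$ forces $a\le c\oplus d$. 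This leaves an inequality $\bracket{a}{f}\le\bracket{c}{f}\oplus\bracket{d}{g}$ still valid on $W$ and still violated at $(f_0,g_0)$.

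The decisive and hardest step --- where (iii) is indispensable --- is to \emph{diagonalize} this inequality: to manufacture from the triple $(a,c,d)$ a single vector $x$ with $\bracket{f}{x}\le\bracket{g}{x}$ for all $(f,g)\in W$ and $\bracket{f_0}{x}>\bracket{g_0}{x}$; equivalently, to show that the ``diagonal'' half-spaces $\{(f,g):\bracket{x}{f}\le\bracket{x}{g}\}$ already cut out $W$. I expect to carry this out by an iterative construction: repeatedly compose $\bracket{a}{f}\le\bracket{c}{f}\oplus\bracket{d}{g}$ with itself along chains $(f,g),(g,h),\dots$ that remain in $W$ by virtue of (iii), obtaining a monotone family of candidate vectors, and then pass to the limit using the closedness asserted in (i); the bulk of the work is in checking that the limit lies in $W^\diamond$ while still separating $(f_0,g_0)$. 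An alternative I would also consider is to bypass the classical theorem and prove the diagonal separation directly by a Hahn--Banach-type extension, arranging that (ii)--(iii) keep the extended functional of diagonal form. Once the diagonal separation is established, the reverse inclusion $(W^\diamond)^\circ\subseteq W$, and hence Theorem \ref{bipolar}, follows at once.
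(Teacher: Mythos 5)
Your necessity argument and your reduction of sufficiency to a separation statement (if $(f_0,g_0)\notin W$ then some $x\in W^\diamond$ violates the corresponding inequality, hence $(f_0,g_0)\notin(W^\diamond)^\circ$) are correct and agree with the paper's Theorem~\ref{ThBipolarRmax}. The problem is that the step you yourself single out as decisive --- converting the two-sided inequality $\bracket{a}{f}\le\bracket{c}{f}\oplus\bracket{d}{g}$ obtained from the classical separation theorem into a \emph{diagonal} one $\bracket{x}{f}\le\bracket{x}{g}$ --- is only announced, not proved. You say you ``expect'' an iterative composition along chains in $W$ followed by a passage to the limit to work, but you give no construction of the candidate vectors, no reason the iteration stabilizes, and no argument that the limit still separates $(f_0,g_0)$; this is exactly where the mathematical content of the theorem sits, so as it stands the proof has a genuine hole. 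It is also not clear the route is viable: the half-space furnished by the separation theorem of \cite{cgqs04} is tied to the projection of $(f_0,g_0)$ onto $W$ viewed as a cone in $\rmax^{2n}$, and massaging it into diagonal form while preserving the strict violation at $(f_0,g_0)$ does not look easier than proving the diagonal separation from scratch.

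The paper does something more direct that you should compare with. It never separates $W$ as a cone in $\rmax^{2n}$; it builds the separating vector straight from the pre-congruence structure, setting $\emb{t}:=\sup\set{f\in\rmax^n}{(f,t)\in W}$ and taking $y:=-\emb{h}$ for a suitable $h\ge t$ with finite entries. That $y\in W^\diamond$ is a short computation (Lemma~\ref{lemma-1-Rmax}): if $(f,g)\in W$ and $g\alpha\le\emb{h}$, then adding $(f,g)\alpha$ to $(\emb{h}^\beta,\emb{h}^\beta)$ and composing with $(\emb{h}^\beta,h)\in W$ yields $f\alpha\le\emb{h}$ --- this is precisely where hypothesis (iii) enters, and it replaces your entire diagonalization step. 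That $y$ actually separates $(s,t)$ from $W$ is where (ii) and closedness enter, via the perturbation $\emb{h}^\beta$ (replacing the $+\infty$ entries of $\emb{h}$ by $\beta$, Corollary~\ref{coro-s1}) and Lemma~\ref{propimpor}, which supplies the finite $h$. If you want to salvage your outline, the honest fix is to drop the post-processing of a generic separating half-space and prove the diagonal separation directly along these lines.
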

This follows from Theorem~\ref{ThBipolarRmax} below.
We deduce this result from a new separation
theorem, Theorem~\ref{TSepPolRmax}, 
which is in some sense a dual of the separation
theorem of~\cite{cgqs04}.

We shall first establish similar results
when the scalars form a complete semiring
which is reflexive in the sense of~\cite{cgq02}. 
This applies in particular to the completion of the max-plus semiring $\rmaxb$.
Thanks to completeness and reflexivity properties, 
we shall obtain the separation theorem by algebraic and order
theoretical arguments, relying on residuation theory
(Galois connection in lattices). Then, we shall derive the results in the case 
of the max-plus semiring by a perturbation argument. 

The notion of polar is illustrated in Figures~\ref{polar}
 and~\ref{figuresemi} below. Figure~\ref{polar} represents a max-plus convex 
cone $V$ and eight max-plus linear inequalities composing a set $W$ 
such that $V=W^\diamond$. For visibility reasons, 
we have depicted completely only one of these inequalities, 
which is also shown separately. Detailed explanations of the 
constructions of Figure~\ref{polar} can be found in 
Example~\ref{ExamplePolar}.  

We shall show that if $W$ is a finite
subset of $(\rmax^n)^2$, representing a finite set
of linear inequalities, then there are in general elements
in the transitive closure of $W$ which are not
linear combinations of $W$. This means that there are 
some inequalities which can be logically deduced from
the finite set of inequalities 
\[
\max_i\left( f_i + x_i \right) \leq \max_j\left( g_j + x_j \right)\;  ,
\;\; (f,g)\in W 
\]
but which cannot be obtained by a linear combination of these
inequalities. Hence, Theorem~\ref{bipolar} shows that
there is no direct analogue of Farkas lemma. 

This is related to the difficulties in defining
the faces of tropical polyhedra, due to the obstructions
mentioned in~\cite{minkowski} and~\cite{DevelinYu}.

Some properties concerning max-plus convex cones 
are analogous to the case of cones in classical convexity.
However, max-plus convex cones also have some features
which are reminiscent of classical linear spaces.
This is because in max-plus algebra, an inequality
$\max_i\left( f_i + x_i \right)\leq 
\max_j \left( g_j + x_j \right) $ 
can always be rewritten as an equality,
$\max_i\left(\max (f_i , g_i ) + x_i \right) = 
\max_j \left( g_j + x_j \right) $. 
This point of view leads to define~\cite{maxplus97} 
the orthogonal of a max-plus convex cone $V$,
\[
V^\bot = \set{(f,g)\in (\rmax^n)^2}{\max_i \left( f_i + x_i \right) 
= \max_j \left( g_j + x_j \right) ,\; \forall x\in V} \; .
\]
Such an orthogonal is a congruence of semimodules, 
in the sense that will be defined in Section~\ref{SecSepTheo}.
Like in the case of linear inequalities, 
we may also consider the set of points satisfying a given set of linear 
equalities: for $W\subset (\rmax^n)^2$ define
\[
W^\top = \set{x\in \rmax^n}{\max_i \left( f_i + x_i \right) 
= \max_j \left( g_j + x_j \right) ,\; \forall (f,g)\in W} \; .
\]
As a consequence of the separation theorem 
of~\cite{zimmerman77,shpiz,cgqs04} we have  
\[ 
(V^\bot)^\top=V \; , 
\]
if $V$ is a closed cone, 
a result which was already stated in~\cite{maxplus97} 
in the particular case of finitely generated cones. 
We shall prove, as a variant of our main theorem, the following 
biorthogonal theorem. Like the bipolar theorem above 
(Theorem~\ref{bipolar}), it also follows from Theorem~\ref{ThBipolarRmax}.
\begin{theorem}[Biorthogonal]
A subset $W\subset (\rmax^n)^2$ is the orthogonal of a cone if, 
and only if, the following conditions are satisfied:
\begin{itemize}
\item[(i)] $W$ is a closed max-plus convex cone,
\item[(ii)] $(f,f)\in W \; , \; \forall f\in \rmax^n$ ,
\item[(iii)] $(f,g)\in W \implies (g,f)\in W$, 
\item[(iv)] $(f,g)\in W\; ,\; (g,h)\in W \implies (f,h)\in W$. 
\end{itemize}
\end{theorem}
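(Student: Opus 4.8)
The plan is to reduce the biorthogonal theorem to the bipolar theorem (Theorem~\ref{bipolar}), exploiting the elementary observation that in the max-plus semiring the equality $\max_i(f_i+x_i)=\max_j(g_j+x_j)$ holds precisely when both inequalities $\max_i(f_i+x_i)\leq\max_j(g_j+x_j)$ and $\max_j(g_j+x_j)\leq\max_i(f_i+x_i)$ hold, and dually that the inequality $\max_i(f_i+x_i)\leq\max_j(g_j+x_j)$ is equivalent to the equality $\max_i(\max(f_i,g_i)+x_i)=\max_j(g_j+x_j)$. Throughout, $f\oplus g$ denotes the entrywise maximum of $f$ and $g$.

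\emph{Necessity.} Suppose $W=V^\bot$. For a fixed $x$, the set of pairs $(f,g)\in(\rmax^n)^2$ satisfying $\max_i(f_i+x_i)=\max_j(g_j+x_j)$ is a closed max-plus convex cone: it is stable by scaling, it is closed since $(f,g)\mapsto(\max_i(f_i+x_i),\max_j(g_j+x_j))$ is continuous and the diagonal is closed, and it is stable by $\oplus$ because $\max_i(\max(f_i,f_i')+x_i)=\max(\max_i(f_i+x_i),\max_i(f_i'+x_i))$. Hence $W$, being the intersection over $x\in V$ of such sets, satisfies~(i); conditions~(ii), (iii) and~(iv) express respectively the reflexivity, symmetry and transitivity of equality.

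\emph{Sufficiency.} Assume $W$ satisfies (i)--(iv) and put \[\widehat W:=\set{(f,g)\in(\rmax^n)^2}{(f\oplus g,g)\in W}.\] First one records that $W\subseteq\widehat W$, since $(f,g)\in W$ gives $(f\oplus g,g)=(f,g)\oplus(g,g)\in W$ by~(ii). Then I claim $\widehat W$ satisfies the three hypotheses of Theorem~\ref{bipolar}. It is a closed max-plus convex cone, because $(f,g)\mapsto(f\oplus g,g)$ is continuous and max-plus linear and $W$ has these properties, so $\widehat W$ is its preimage. It contains every pair with $f\leq g$, because then $f\oplus g=g$ and $(g,g)\in W$ by~(ii). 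The delicate point is transitivity, and this is the main obstacle: if $(f,g),(g,h)\in\widehat W$, then $(f\oplus g,g)\in W$ and $(g\oplus h,h)\in W$; adding $(h,h)$ to the first gives $(f\oplus g\oplus h,g\oplus h)\in W$, whence $(f\oplus g\oplus h,h)\in W$ by~(iv); on the other hand, rewriting $(g\oplus h,h)$ as $(h,g\oplus h)$ by~(iii) and adding $(f,f)$ yields $(f\oplus h,f\oplus g\oplus h)\in W$; composing the last two by~(iv) gives $(f\oplus h,h)\in W$, that is $(f,h)\in\widehat W$. It is precisely here that all four hypotheses, together with the cone structure, are genuinely needed.

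It remains to do the bookkeeping. The first reduction is $\widehat W^\diamond=W^\top$: the inclusion $W^\top\subseteq\widehat W^\diamond$ follows at once from the dictionary between equalities and inequalities, while for the reverse inclusion one uses that, for $(a,b)\in W$, both $(a\oplus b,b)=(a,b)\oplus(b,b)$ and $(a\oplus b,a)=(b,a)\oplus(a,a)$ lie in $W$ by~(ii) and~(iii), so that a point lying in $\widehat W^\diamond$ satisfies the equality attached to $(a,b)$. Applying Theorem~\ref{bipolar} to $\widehat W$ now gives $(\widehat W^\diamond)^\circ=\widehat W$, that is $(W^\top)^\circ=\widehat W$. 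Finally, a pair $(f,g)$ belongs to $(W^\top)^\bot$ if and only if both $(f,g)$ and $(g,f)$ belong to $(W^\top)^\circ=\widehat W$, i.e. if and only if $(f\oplus g,g)\in W$ and $(f\oplus g,f)\in W$; but this condition is equivalent to $(f,g)\in W$, the forward direction being the computation just made and the reverse one following by rewriting $(f\oplus g,f)$ as $(f,f\oplus g)$ via~(iii) and composing with $(f\oplus g,g)$ via~(iv). Hence $W=(W^\top)^\bot$, so $W$ is the orthogonal of the closed cone $V:=W^\top$, as required.
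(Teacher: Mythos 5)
Your proof is correct, but it takes a genuinely different route from the paper's. The paper does not deduce the biorthogonal theorem from the bipolar theorem: it develops the congruence case in parallel with the polar-cone case, proving a separate separation theorem for closed congruences (Theorem~\ref{TSepConRmax}, which rests on Lemma~\ref{LemmaProp} and the congruence half of Corollary~\ref{coro-s1}), and then obtains $W=(W^\top)^\bot$ for closed congruences as the ``resp.''\ clause of Theorem~\ref{ThBipolarRmax}. You instead reduce equalities to inequalities outright: the map $W\mapsto \widehat W=\set{(f,g)}{(f\oplus g,g)\in W}$ turns a closed congruence into a closed polar cone with $\widehat W^\diamond=W^\top$, and $W$ is recovered from $\widehat W$ via $(f,g)\in W\iff (f,g)\in\widehat W$ and $(g,f)\in\widehat W$; all the verifications (in particular the transitivity of $\widehat W$, which is the only non-routine point, and the two-way dictionary at the end) check out. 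Your reduction exploits the idempotency identity $\oplus_i(f_i\oplus g_i)x_i=(\oplus_i f_ix_i)\oplus(\oplus_jg_jx_j)$, which the paper itself advertises in the introduction as the reason max-plus inequalities and equalities are interchangeable, but which it never actually uses to shortcut the congruence case. What your route buys is economy: the entire congruence-specific machinery of Section 4 becomes unnecessary for this particular theorem. What the paper's route buys is the separation theorem for congruences itself, stated with an explicit separating vector $y=-\emb{h}$; that result is of independent interest (it is the one invoked in~\cite{conditioned} for observer synthesis) and is formulated so as to work uniformly in the complete reflexive-semiring setting of Section 3, where the same reduction would also be available but is not the point.
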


Congruences
are used in~\cite{conditioned} 
to solve some observability problems
for max-plus linear discrete event systems, i.e.\ to reconstruct
some information on the state of the system from 
observations. The present biorthogonality
theorem is used there 
to construct dynamic observers:
in fact, cones are simpler to manipulate than congruences,
due to the absence of ``doubling'' of the dimension, duality
allows us  to reduce algorithmic problems
concerning congruences to algorithmic problems
concerning cones. 
The duality results
of the present paper may also be useful in the recent application of max-plus
polyhedra to static analysis by abstract interpretation~\cite{AGG08}.

\section{Preliminaries}\label{SectPrel}

In this section we recall some notions and results concerning semimodules over 
idempotent semirings which we shall need throughout this paper. We refer 
the reader to~\cite{bcoq,cgq02,BlythJan72} for more background.

A monotone map $F:T\rightarrow S$ between two ordered sets 
$(T,\leq)$ and $(S,\leq)$ is said to be {\em residuated} 
if for each $s\in S$ the least upper bound 
of the set $\set{t\in T}{F(t)\leq s}$ exists and belongs to it. 
When $F$ is residuated, the map $F^\sharp :S\rightarrow T$ defined by 
$F^\sharp (s):=\vee \set{t\in T}{F(t)\leq s}$, where $\vee Q$  
denotes the least upper bound of $Q$, is called the {\em residual} of $F$. 
We say that $F$ is {\em continuous} if $F(\vee Q)=\vee F(Q)$
for all $Q\subset T$, where $F(Q):=\set{F(t)}{t\in Q}$. 
The term continuous refers to the Scott topology~\cite{continuous},
which is a (non-Hausdorff) topology well adapted to the study of completed 
ordered algebraic structures. 
When $T$ and $S$ are complete ordered sets, meaning that any of their 
subsets admits a least upper bound, 
it can be shown that $F:T\rightarrow S$ is residuated if, 
and only if, it is continuous (see~\cite[Th.~5.2]{BlythJan72} 
or~\cite[Th.~4.50]{bcoq}). 

Recall that any idempotent semiring $({\cK},\oplus,\otimes,\varepsilon,e)$, 
where $\varepsilon$ and $e$ represent the neutral elements for 
addition $\oplus $ and multiplication $\otimes$ respectively, 
can be equipped with the {\em natural} order relation: 
$a\leq b\iff a\oplus b=b$, for which $\vee\left\{a,b\right\}=a\oplus b$. 
We say that an idempotent semiring $\cK $ is {\em complete} 
if it is complete as a naturally ordered set, 
and if the left and right multiplications, $L_a:\cK\rightarrow \cK$, 
$L_a(b)=a\otimes b$ and $R_a:\cK\rightarrow \cK$, 
$R_a(b)=b\otimes a$, are continuous.    

Given a semiring $({\cK},\oplus,\otimes,\varepsilon_{{\cK}},e)$, 
a right {\em $\cK$-semimodule} 
is a commutative monoid $({\cX},\hat{\oplus })$, 
with neutral element $\varepsilon_{{\cX}}$, equipped with a map 
${\cX}\times {\cK}\to {\cX}$, $(x,\lambda) \to x \cdot \lambda $, 
(right action), which satisfies: 
\begin{align*}
& & (x\; \hat{\oplus }\;z)\cdot  \lambda =x \cdot \lambda \; \hat{\oplus }\; 
z \cdot \lambda \; ,   
& & x\cdot (\lambda \oplus \mu)=x\cdot \lambda \; \hat{ \oplus } \; 
x \cdot \mu \; ,  
& & x \cdot (\lambda \otimes \mu)= (x \cdot \lambda )\cdot \mu \; , \\
& & x \cdot \varepsilon_{\cK}= \varepsilon_{\cX}\; ,
& & \varepsilon_{\cX} \cdot \lambda = \varepsilon_{\cX} \; ,
& & x\cdot e =x \; ,
\end{align*}
for all $x,z\in {\cX }$ and $\lambda,\mu\in {\cK}$. Henceforth, 
the semimodule addition will be denoted by $\oplus$ (instead of 
$\hat{\oplus }$), like the semiring addition, 
and we will use concatenation to denote both the multiplication 
of ${\cK}$ and the right action. Also for simplicity, 
we will denote by $\varepsilon$ both the zero 
element $\varepsilon_{{\cK}}$ of ${\cK}$ and the zero 
element $\varepsilon_{{\cX}}$ of ${\cX}$.  
Left $\cK$-semimodules are defined dually.
A {\em subsemimodule} of ${\cX}$ is a subset 
$V\subset X$ such that $x \lambda \oplus z \mu \in V$ 
for all $x,z\in V$ and $\lambda,\mu\in {\cK}$.  
Note that a right (or left) $\cK$-semimodule $(X,\oplus)$ is necessarily 
idempotent when $\cK$ is idempotent. In this case, 
$X$ is said to be {\em complete}, 
if it is complete as a naturally ordered set, 
and if for all $u\in X$ and $\lambda \in \cK$, 
the maps $R_\lambda:X\rightarrow X$, $z\mapsto z\lambda $ and 
$L_u:\cK\rightarrow X$, $\mu\mapsto u\mu $, 
are both continuous. Then, we can define 
\begin{align*}
u\backslash x:=L_u^\sharp (x)=\vee \set{\mu \in \cK}{u\mu \leq x} \; 
\makebox{ and } \;
x/\lambda:=R_\lambda ^\sharp(x)=\vee \set{z \in X}{z\lambda \leq x} \; ,
\end{align*}  
for all $x,u\in X$ and $\lambda \in \cK$. 
Note that by definition, we have $u (u\backslash x)\leq x$ and 
$(x/\lambda) \lambda \leq x$.  
A subsemimodule $V$ of $X$ is complete, 
if $\vee Q\in V$ for all $Q\subset V$. 
It is convenient to recall the following universal separation 
theorem for complete semimodules. 
\begin{theorem}[{\cite[Th.~8]{cgq02}}]\label{UnSepTheo}
Let $V\subset X$ be a complete subsemimodule. Assume that $x\not \in V$. 
Then, 
\[
v\in V \implies v\backslash P_V(x) =v\backslash x 
\] 
and 
\[
x\backslash P_V(x) \neq x\backslash x \; , 
\] 
where $P_V (x):= \vee \set{v\in V}{v\leq x}$. 
\end{theorem}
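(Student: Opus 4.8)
The plan is to derive both assertions directly from the residuation identities recalled above, together with the completeness of $V$. First I would record the basic fact that, since $V$ is a \emph{complete} subsemimodule, the element $P_V(x)=\vee\set{v\in V}{v\leq x}$ actually belongs to $V$, and that it is the \emph{greatest} element of $V$ dominated by $x$: it lies below $x$ (being a supremum of elements that are below $x$), it lies in $V$ (by completeness of $V$), and any $v\in V$ with $v\leq x$ satisfies $v\leq P_V(x)$ by definition of the supremum. This characterization of $P_V(x)$ as the largest point of $V$ below $x$ is the conceptual crux on which everything rests.

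For the first assertion, fix $v\in V$. Since $P_V(x)\leq x$, the inclusion $\set{\mu\in\cK}{v\mu\leq P_V(x)}\subseteq\set{\mu\in\cK}{v\mu\leq x}$ gives $v\backslash P_V(x)\leq v\backslash x$ upon taking suprema. For the reverse inequality I would use the attainment property $v(v\backslash x)\leq x$ noted above. The key observation is that $v(v\backslash x)\in V$, because $v\in V$ and $V$ is stable under the action; combined with $v(v\backslash x)\leq x$, the maximality of $P_V(x)$ forces $v(v\backslash x)\leq P_V(x)$. Hence $v\backslash x$ belongs to $\set{\mu\in\cK}{v\mu\leq P_V(x)}$, so that $v\backslash x\leq v\backslash P_V(x)$, and equality follows.

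For the second assertion I would argue by contradiction. First note that $x\backslash x\geq e$, since $xe=x\leq x$ places $e$ in the set $\set{\mu}{x\mu\leq x}$. Suppose $x\backslash P_V(x)=x\backslash x$; then $x\backslash P_V(x)\geq e$, and applying the attainment property together with the monotonicity of the action (which follows from distributivity, as $e\leq\mu$ yields $x=xe\leq x\mu$) gives $x=xe\leq x(x\backslash P_V(x))\leq P_V(x)$. Combined with $P_V(x)\leq x$ this yields $x=P_V(x)\in V$, contradicting the hypothesis $x\notin V$. Therefore $x\backslash P_V(x)\neq x\backslash x$.

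I do not expect a serious obstacle here: once $P_V(x)$ is identified as the greatest point of $V$ below $x$, the whole argument is a short exercise in residuation. The only step demanding genuine care is the interplay between residuation and the semimodule structure, namely the verification that $v(v\backslash x)$ stays inside $V$, since it is precisely this stability that allows the maximality of $P_V(x)$ to be invoked in the first assertion. The second assertion then amounts to reading off, through the residual $x\backslash\,\cdot\,$, the tautology that $x\notin V$ is equivalent to $x\neq P_V(x)$.
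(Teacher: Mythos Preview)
The paper does not give its own proof of this statement; it is quoted verbatim as Theorem~8 of~\cite{cgq02} and used as a known result. Your argument is correct and is in fact the standard proof that appears in the cited reference: the first assertion follows from monotonicity of $v\backslash\cdot$ in one direction and from $v(v\backslash x)\in V$, $v(v\backslash x)\leq x$ (hence $\leq P_V(x)$) in the other; the second reduces, via $x\backslash x\geq e$, to the observation that $x\backslash P_V(x)\geq e$ would force $x\leq P_V(x)$ and hence $x\in V$.
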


\begin{example}
The set $\rmax^n$ is a 
right $\rmax$-semimodule when it is equipped 
with the usual operations: 
$(x\oplus z)_i:=x_i \oplus z_i$ and 
$(x\lambda )_i:= x_i \lambda$ 
for all $x,z\in \rmax^n$ and $\lambda \in \rmax$. 
Since $\rmax $ is not a complete semiring, 
it will be convenient to consider the 
{\em completed} max-plus semiring $\rmaxb$, 
which is obtained by adjoining the element $+\infty$ to $\rmax$.
\end{example}

In order to state separation theorems with the usual scalar product 
form, we next recall the notion of dual pair of~\cite{cgq02}, 
which is analogous to the one that arises in the theory of 
topological vector spaces (see for example~\cite{bourbaki}). 
As usual, a map between right (resp. left) $\cK$-semimodules is 
right (resp. left) {\em linear} if it preserves finite sums and 
commutes with the action of scalars. 
We call {\em pre-dual pair} for a complete idempotent semiring $\cK$, 
a complete right $\cK$-semimodule $X$ together with a complete 
left $\cK$-semimodule $Y$ equipped with a bracket 
$\bracket{\cdot}{\cdot}$ from $Y\times X$ to $\cK$, 
such that, for all $x\in X$ and $y\in Y$, the maps 
$y\mapsto \bracket{y}{x}$ and $x\mapsto \bracket{y}{x}$ are respectively 
left and right linear, and continuous. 
We shall denote by $(Y,X)$ this pre-dual pair. 
We say that $Y$ {\em separates} $X$ if 
\[
(\forall y\in Y, \bracket{y}{x}=\bracket{y}{x'})\implies x=x' \; ,
\]
and that $X$ {\em separates} $Y$ if 
\[
(\forall x\in X, \bracket{y}{x}=\bracket{y'}{x})\implies y=y' \; .
\]
A pre-dual pair $(Y,X)$ such that $X$ separates $Y$ and $Y$ separates $X$ 
is called {\em dual pair}. 

When $X$ is a complete right $\cK$-semimodule, we call 
{\em opposite} semimodule of $X$ the left $\cK$-semimodule 
$\opp{X}$ whose set of elements is $X$ equipped with the addition 
$(x,u) \mapsto \wedge \left\{ x,u\right\} $ and the left action 
$(\lambda , x)\mapsto x/\lambda $. Here, $\wedge \left\{ x,u \right\}$ 
is the greatest lower bound of $\left\{ x,u\right\} $ for the 
natural order of $X$. The fact that $\opp{X}$ 
is a complete left $\cK$-semimodule follows from 
properties of the residual map $x/\lambda $ 
(see~\cite{cgq02} for details). In particular, 
the complete idempotent semiring $\cK$
can be thought of as a right $\cK$-semimodule, and so
the same construction and notation apply to $\cK$. 

\begin{theorem}[{\cite[Th.~22]{cgq02}}]\label{OppTheo}
Let $X$ be a complete right $\cK$-semimodule. Then, 
the semimodules $\opp{X}$ and $X$ form a dual pair for the bracket 
$\opp{X}\times X\rightarrow \opp{\cK}$, $(y,x)\mapsto \bracket{y}{x}= x\backslash y$.
\end{theorem}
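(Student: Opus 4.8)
The plan is to unwind the definitions and reduce everything to the residuation identities coming from the two adjunctions $x\mu\le y\iff\mu\le x\backslash y$ (for $x,y\in X$, $\mu\in\cK$) and $z\lambda\le x\iff z\le x/\lambda$ (for $x,z\in X$, $\lambda\in\cK$), together with the corresponding adjunctions inside $\cK$. First I would check that $(\opp X,X)$ is a pre-dual pair. That $\opp X$ is a complete left $\cK$-semimodule and $X$ a complete right $\cK$-semimodule has been recorded above (the former from the properties of $x/\lambda$), and $x\backslash y:=L_x^\sharp(y)=\vee\set{\mu\in\cK}{x\mu\le y}$ is a well-defined element of the complete semiring $\cK$, so the bracket $(y,x)\mapsto x\backslash y$ takes values in $\opp\cK$. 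It then remains to verify that, for fixed $x$, the map $y\mapsto x\backslash y$ is left linear and continuous from $\opp X$ to $\opp\cK$, and that, for fixed $y$, the map $x\mapsto x\backslash y$ is right linear and continuous from $X$ to $\opp\cK$. Since $y\mapsto x\backslash y$ is the residual $L_x^\sharp$, it preserves arbitrary infima, $x\backslash(\wedge Q)=\wedge_{q\in Q}(x\backslash q)$; as infima in $X$ (resp. $\cK$) are suprema in $\opp X$ (resp. $\opp\cK$), this gives both additivity and continuity in the first argument, while homogeneity there is the identity $x\backslash(y/\lambda)=(x\backslash y)/\lambda$, immediate from the adjunctions and the axiom $(x\mu)\lambda=x(\mu\lambda)$. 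For the second argument, $(\vee Q)\backslash y=\wedge_{q\in Q}(q\backslash y)$ yields additivity and continuity and $(x\lambda)\backslash y=\lambda\backslash(x\backslash y)$ yields homogeneity, again directly from the adjunctions together with $(x\lambda)\mu=x(\lambda\mu)$ and the continuity of the right translations $R_\mu$ on $X$.

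Next I would treat separation, for which the elementary but crucial remark is that $x\backslash x\ge e$ for every $x\in X$, since $xe=x\le x$ places $e$ in the set whose supremum is $x\backslash x$. To see that $\opp X$ separates $X$, suppose $x,x'\in X$ satisfy $\bracket{y}{x}=\bracket{y}{x'}$ for all $y\in\opp X$, i.e. $x\backslash y=x'\backslash y$ for every $y\in X$; specializing $y:=x$ gives $x'\backslash x=x\backslash x\ge e$, hence $x'=x'e\le x'(x'\backslash x)\le x$ by monotonicity of $\mu\mapsto x'\mu$ and the inequality $x'(x'\backslash x)\le x$, and specializing $y:=x'$ gives $x\le x'$ symmetrically, so $x=x'$. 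Dually, if $y,y'\in\opp X$ satisfy $\bracket{y}{x}=\bracket{y'}{x}$ for all $x\in X$, i.e. $x\backslash y=x\backslash y'$ for every $x$, then $x:=y'$ yields $y'\backslash y\ge e$ and so $y'\le y$, while $x:=y$ yields $y\le y'$; thus $y=y'$ (the underlying sets of $\opp X$ and $X$ coincide, so equality is unambiguous). Hence $X$ separates $\opp X$, and $(\opp X,X)$ is a dual pair.

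I expect the only point requiring genuine attention to be the identity $(\vee Q)\backslash y=\wedge_{q\in Q}(q\backslash y)$ used for continuity of the bracket in its second argument: this is where the completeness hypothesis on $X$ is invoked in its strong form — not merely that $X$ is a complete lattice, but that the right translations $z\mapsto z\mu$ are continuous — the argument being $(\vee Q)\mu=\vee_q(q\mu)\le y\iff(\forall q)\,q\mu\le y\iff(\forall q)\,\mu\le q\backslash y\iff\mu\le\wedge_q(q\backslash y)$. Everything else is formal residuation calculus and the observation $x\backslash x\ge e$.
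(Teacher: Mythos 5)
The paper itself gives no proof of this statement---it is imported verbatim from \cite[Th.~22]{cgq02}---so there is nothing internal to compare against; your argument is correct and is essentially the standard proof from that reference. Everything does indeed reduce to the residuation identities $x\backslash(\wedge Q)=\wedge_{q\in Q}(x\backslash q)$, $(\vee Q)\backslash y=\wedge_{q\in Q}(q\backslash y)$, $x\backslash(y/\lambda)=(x\backslash y)/\lambda$, $(x\lambda)\backslash y=\lambda\backslash(x\backslash y)$ for bilinearity and continuity, and to $x\backslash x\geq \unit$ combined with $u(u\backslash v)\leq v$ for separation on both sides; you also correctly identify the one step where the continuity of the right translations $R_\mu$ on $X$, rather than mere lattice-completeness, is genuinely used.
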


Given a pre-dual pair $(Y,X)$ for $\cK$ and an arbitrary element 
$\varphi \in \cK$, consider the maps: 
\begin{align*}
X\rightarrow Y \; , \; x\mapsto {}^-x=\vee \set{y\in Y}{\bracket{y}{x}\leq \varphi}
\; , \\ 
Y\rightarrow X \; , \; y\mapsto y^-=\vee \set{x\in X}{\bracket{y}{x}\leq \varphi}
\; .
\end{align*} 
Note that if the dual pair for $\opp{\cK}$ of Theorem~\ref{OppTheo} is used, 
then the suprema in the definition above are actually infima 
(in terms of the order of $\cK$) and the inequalities are reversed.  
The elements of $X$ and $Y$ of the form $y^-$ and ${}^-x$, 
respectively, are called {\em closed}. 
For $\lambda \in \cK$, we define 
\[
\lambda ^-:=\lambda \backslash \varphi=\vee \set{\mu \in \cK}{\lambda \mu\leq \varphi} 
\; \makebox{ and } \;
{}^-\lambda :=\varphi/ \lambda=\vee \set{\mu \in \cK}{\mu \lambda \leq \varphi} \; .
\] 
A complete idempotent semiring $\cK$ is 
{\em reflexive} if there exists $\varphi \in \cK$ such that 
${}^-(\lambda^-)=\lambda$ and $({}^-\lambda)^-=\lambda$ 
for all $\lambda \in \cK$. 
We shall need the following proposition. 

\begin{proposition}[{\cite[Prop.~32 and Prop.~27]{cgq02}}]\label{propXclosed}
If $\cK$ is reflexive and $(Y,X)$ is a pre-dual pair for which 
$Y$ separates $X$, then, all the elements of $X$ are closed. Moreover, 
for all $x,u\in X$ it is satisfied:
\begin{align}\label{prop1} 
u \backslash x=\bracket{{}^-x}{u}^- \; . 
\end{align}  
\end{proposition}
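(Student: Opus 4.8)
The plan is to deduce everything from the antitone Galois connection attached to the fixed element $\varphi$ together with the reflexivity of $\cK$. First I would record that, since the bracket is continuous in each variable, the maps $x\mapsto{}^-x$ and $y\mapsto y^-$ form an antitone Galois connection between the complete lattices $X$ and $Y$: for all $x\in X$ and $y\in Y$,
\[
\bracket{y}{x}\leq\varphi \iff y\leq{}^-x \iff x\leq y^-\; .
\]
Indeed, continuity of $y\mapsto\bracket{y}{x}$ gives $\bracket{{}^-x}{x}=\vee\set{\bracket{y}{x}}{\bracket{y}{x}\leq\varphi}\leq\varphi$, so ${}^-x$ is the greatest $y$ satisfying the defining inequality, and symmetrically for $y^-$. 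In particular $x\leq({}^-x)^-$ for every $x\in X$, and $x$ is closed exactly when this inequality is an equality.

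The heart of the proof is the reverse inequality $({}^-x)^-\leq x$, which I would establish using the hypothesis that $Y$ separates $X$: it is enough to check $\bracket{y}{({}^-x)^-}\leq\bracket{y}{x}$ for every $y\in Y$, since the opposite inequality follows from $x\leq({}^-x)^-$ by monotonicity of the bracket. Fix $y\in Y$ and set $\lambda=\bracket{y}{x}$. Scaling $y$ on the left by ${}^-\lambda=\varphi/\lambda$ and using left linearity, $\bracket{({}^-\lambda)y}{x}=({}^-\lambda)\lambda=(\varphi/\lambda)\lambda\leq\varphi$, hence $({}^-\lambda)y\leq{}^-x$ by the Galois connection. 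Since also $\bracket{{}^-x}{({}^-x)^-}\leq\varphi$ (the Galois connection applied to $({}^-x)^-\leq({}^-x)^-$), monotonicity gives
\[
({}^-\lambda)\,\bracket{y}{({}^-x)^-}=\bracket{({}^-\lambda)y}{({}^-x)^-}\leq\varphi\; ,
\]
that is, $\bracket{y}{({}^-x)^-}\leq({}^-\lambda)\backslash\varphi=({}^-\lambda)^-$; and reflexivity of $\cK$ yields $({}^-\lambda)^-=\lambda=\bracket{y}{x}$. Thus $\bracket{y}{({}^-x)^-}=\bracket{y}{x}$ for all $y$, so $({}^-x)^-=x$ because $Y$ separates $X$; every element of $X$ is closed.

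For identity~\eqref{prop1} I would then compute directly, using right linearity of the bracket ($\bracket{{}^-x}{u}\mu=\bracket{{}^-x}{u\mu}$) and the Galois connection:
\begin{align*}
\bracket{{}^-x}{u}^- &=\vee\set{\mu\in\cK}{\bracket{{}^-x}{u}\mu\leq\varphi}
=\vee\set{\mu\in\cK}{\bracket{{}^-x}{u\mu}\leq\varphi}\\
&=\vee\set{\mu\in\cK}{u\mu\leq({}^-x)^-}=\vee\set{\mu\in\cK}{u\mu\leq x}=u\backslash x\; ,
\end{align*}
the penultimate equality being exactly the closedness $({}^-x)^-=x$ just proved.

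The step I expect to be the main obstacle is the core inequality $({}^-x)^-\leq x$: one must guess the right element to scale, namely $({}^-\lambda)y$ with $\lambda=\bracket{y}{x}$, and invoke the correct half of reflexivity, $({}^-\lambda)^-=\lambda$ rather than ${}^-(\lambda^-)=\lambda$, while keeping the left/right conventions for the action and for the two slots of the bracket consistent throughout.
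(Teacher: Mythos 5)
Your proof is correct. The paper gives no proof of this proposition --- it is imported verbatim from \cite{cgq02} --- and your argument is a faithful, self-contained reconstruction of the expected one: the antitone Galois connection $\bracket{y}{x}\leq\varphi\iff y\leq{}^-x\iff x\leq y^-$ coming from continuity of the bracket, the scaling trick with $({}^-\lambda)y$ for $\lambda=\bracket{y}{x}$ combined with the half $({}^-\lambda)^-=\lambda$ of reflexivity and the separation hypothesis to force $({}^-x)^-=x$, and then \eqref{prop1} by direct computation from right linearity and closedness.
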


\begin{example}
The completed max-plus semiring $\rmaxb $ is reflexive. 
If we take $\varphi =0$, then ${}^-\lambda=\lambda^-=-\lambda $ 
for all $\lambda \in \rmaxb$. If $\rmaxb^n$ is  
equipped with the bracket $\bracket{y}{x}:=\oplus_i y_i x_i$, 
then~\eqref{prop1} becomes $u \backslash x=-(\oplus_i u_i (-x_i))$ 
for all $u,x\in \rmaxb^n$.
\end{example}

The results above are related to the max-plus analogues of the 
representation theorem for linear forms and the analytic form 
of the Hahn-Banach theorem of~\cite{cgq02} which extend the 
corresponding results of~\cite{litvinov00}.   
For a detailed presentation and 
examples of pre-dual pairs and reflexive semirings, 
we refer the reader to~\cite{cgq02}. 

\section{Separation Theorems}\label{SecSepTheo}

Given a right $\cK$-semimodule $X$, a subset $W$ of $ X^2$ 
is called a {\em pre-congruence} if it is a subsemimodule such that 
\[
(f,f)\in W \; , \; \forall f\in X
\]
and 
\[
(f,g)\in W\; ,\; (g,h)\in W \implies (f,h)\in W \; .
\]
If in addition a pre-congruence $W$ verifies 
\[
(f,g)\in W \implies (g,f)\in W \; ,
\]
then $W$ is said to be a {\em congruence}. In other words, 
a pre-congruence (resp. congruence) is a pre-order (resp. equivalence) 
relation on $X$ which has a semimodule 
structure when it is thought of as a subset of the semimodule $X^2$.  
A pre-congruence $W$ which satisfies the property
\begin{align}\label{Def1Polar}
f\leq g \implies (f,g)\in W \; ,
\end{align}
is called a {\em polar cone}. 

\begin{remark}
When $W$ is a pre-congruence, note that~\eqref{Def1Polar} 
is equivalent to
\begin{align}\label{Def2Polar}
f\leq g \; , \;  (g,h)\in W \implies (f,h)\in W \; .
\end{align}
Indeed, clearly~\eqref{Def2Polar} implies~\eqref{Def1Polar} because 
$(g,g)\in W$ for all $g\in X$. Conversely, 
since $W$ is a pre-congruence, it follows that 
$(f,g)\in W$ and $(g,h)\in W$ imply $(f,h)\in W$, 
and so~\eqref{Def1Polar} implies~\eqref{Def2Polar}. 
\end{remark}

The definition above is motivated by the classical notion of polar cones. 
Recall that if $V\subset \R^n$ is a (classical) convex cone, then its polar 
is defined as 
\[ 
\set{f\in \R^n}{\sum_i f_i x_i \leq 0, \forall x\in V} \; , 
\]
where $f_i x_i$ denotes the usual multiplication 
of the real numbers $f_i$ and $x_i$.   
The direct extension of this definition to $X=\rmax^n$, i.e. 
\[ 
\set{f\in \rmax^n}{\oplus_i f_i x_i \leq \zero, \forall x\in V}\; ,
\] 
where $f_i x_i$ denotes $f_i \otimes x_i=f_i + x_i$, 
the usual addition of real numbers, 
is not convenient because this set is usually trivial independently of 
$V\subset \rmax^n$. However, as it was mentioned in the introduction, 
the notion of polar can be extended to the max-plus algebra framework 
if we consider pairs of vectors instead of individual vectors, i.e. 
if we define the polar of $V$ by 
\begin{equation}\label{DefPolar} 
W:= \set{(f,g)\in (\rmax^n)^2}{\oplus_i f_i x_i \leq \oplus_j g_j x_j, 
\forall x\in V} \; . 
\end{equation} 
Observe that any set of this form is a polar cone 
of $X^2$ in the sense defined above. However, 
not all polar cones have this form (i.e. 
are the polar of a cone $V$), take for example 
$W=\set{(f,g)\in (\rmax)^2}{g=\zero \implies f=\zero}$. 

\begin{example}\label{ExamplePolar}
Let $\cK=\rmax$ and $X=\rmax^3$. Consider the semimodule $V\subset X$ 
generated by the vectors $a=(1,0,2)^T$, $b=(1,1,0)^T$ and $c=(2,4,0)^T$,  
i.e. the semimodule composed of all the max-plus linear combinations of 
these three vectors. This semimodule is represented by the bounded 
dark gray region of Figure~\ref{polar} together with the segments 
joining the points $a$ and $c$ to it. In this figure, 
every element of $V$ with finite entries is projected orthogonally 
onto any plane orthogonal to the main diagonal $(1,1,1)^T$ of $\R^3$. 
Note that vectors that are proportional in the max-plus sense 
are sent to the same point, so this gives a faithful image of $V$.

\begin{figure}
\begin{center}
\begin{tabular}[t]{cc}

\begin{picture}(0,0)%
\includegraphics{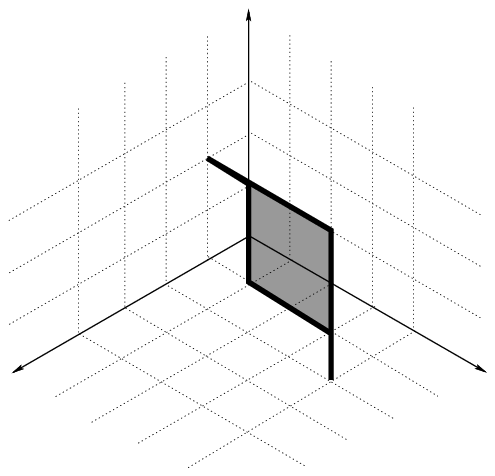}%
\end{picture}%
\setlength{\unitlength}{1302sp}%
\begingroup\makeatletter\ifx\SetFigFont\undefined%
\gdef\SetFigFont#1#2#3#4#5{%
  \reset@font\fontsize{#1}{#2pt}%
  \fontfamily{#3}\fontseries{#4}\fontshape{#5}%
  \selectfont}%
\fi\endgroup%
\begin{picture}(8424,7224)(1789,-7573)
\put(5026,-2836){\makebox(0,0)[lb]{\smash{{\SetFigFont{6}{7.2}{\rmdefault}{\mddefault}{\updefault}{\color[rgb]{0,0,0}$a$}%
}}}}
\put(9376,-5686){\makebox(0,0)[lb]{\smash{{\SetFigFont{6}{7.2}{\rmdefault}{\mddefault}{\updefault}{\color[rgb]{0,0,0}$x_2$}%
}}}}
\put(2401,-5611){\makebox(0,0)[lb]{\smash{{\SetFigFont{6}{7.2}{\rmdefault}{\mddefault}{\updefault}{\color[rgb]{0,0,0}$x_1$}%
}}}}
\put(6676,-4261){\makebox(0,0)[lb]{\smash{{\SetFigFont{6}{7.2}{\rmdefault}{\mddefault}{\updefault}{\color[rgb]{0,0,0}$V$}%
}}}}
\put(5476,-1036){\makebox(0,0)[lb]{\smash{{\SetFigFont{6}{7.2}{\rmdefault}{\mddefault}{\updefault}{\color[rgb]{0,0,0}$x_3$}%
}}}}
\put(7051,-6436){\makebox(0,0)[lb]{\smash{{\SetFigFont{6}{7.2}{\rmdefault}{\mddefault}{\updefault}{\color[rgb]{0,0,0}$c$}%
}}}}
\put(5476,-4711){\makebox(0,0)[lb]{\smash{{\SetFigFont{6}{7.2}{\rmdefault}{\mddefault}{\updefault}{\color[rgb]{0,0,0}$b$}%
}}}} 
\end{picture}%

& 

\begin{picture}(0,0)%
\includegraphics{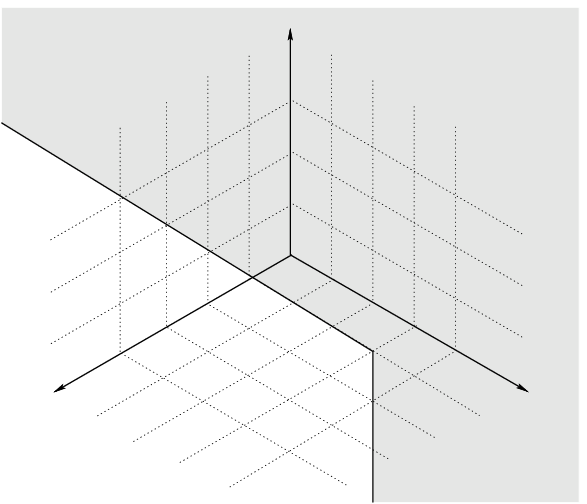}%
\end{picture}%
\setlength{\unitlength}{1302sp}%
\begingroup\makeatletter\ifx\SetFigFont\undefined%
\gdef\SetFigFont#1#2#3#4#5{%
  \reset@font\fontsize{#1}{#2pt}%
  \fontfamily{#3}\fontseries{#4}\fontshape{#5}%
  \selectfont}%
\fi\endgroup%
\begin{picture}(8434,7234)(1779,-7583)
\put(9376,-5686){\makebox(0,0)[lb]{\smash{{\SetFigFont{6}{7.2}{\rmdefault}{\mddefault}{\updefault}{\color[rgb]{0,0,0}$x_2$}%
}}}}
\put(2401,-5611){\makebox(0,0)[lb]{\smash{{\SetFigFont{6}{7.2}{\rmdefault}{\mddefault}{\updefault}{\color[rgb]{0,0,0}$x_1$}%
}}}}
\put(5476,-1036){\makebox(0,0)[lb]{\smash{{\SetFigFont{6}{7.2}{\rmdefault}{\mddefault}{\updefault}{\color[rgb]{0,0,0}$x_3$}%
}}}}
\put(6526,-2836){\makebox(0,0)[lb]{\smash{{\SetFigFont{6}{7.2}{\rmdefault}{\mddefault}{\updefault}{\color[rgb]{0,0,0}$2 x_1\leq x_2\oplus 3 x_3$}%
}}}}
\end{picture}%

\end{tabular}

\begin{picture}(0,0)%
\includegraphics{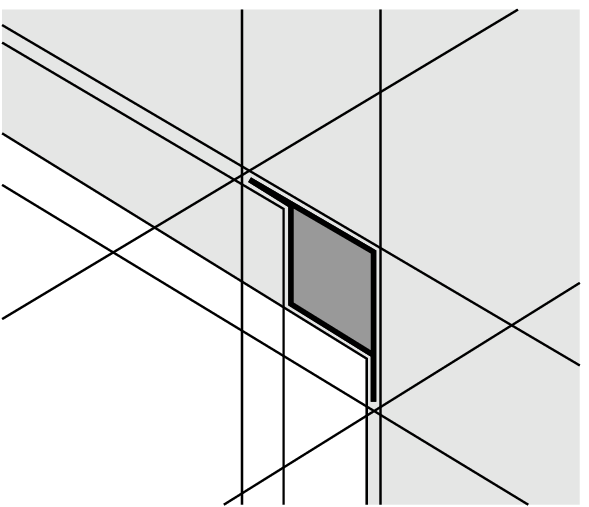}%
\end{picture}%
\setlength{\unitlength}{1302sp}%
\begingroup\makeatletter\ifx\SetFigFont\undefined%
\gdef\SetFigFont#1#2#3#4#5{%
  \reset@font\fontsize{#1}{#2pt}%
  \fontfamily{#3}\fontseries{#4}\fontshape{#5}%
  \selectfont}%
\fi\endgroup%
\begin{picture}(8466,7266)(1768,-7594)
\put(6526,-4336){\makebox(0,0)[lb]{\smash{{\SetFigFont{6}{7.2}{\rmdefault}{\mddefault}{\updefault}{\color[rgb]{0,0,0}$V$}%
}}}}
\end{picture}%

\end{center}
\caption{The semimodule $V$, a supporting half-space, 
and the polar of $V$.}
\label{polar} 
\end{figure}

If we define the semimodule $W\subset X^2$ by~\eqref{DefPolar},
then it can be checked that $W$ is a polar cone of $X^2$. 
In other words, this polar cone is composed of all the pairs 
of vectors $(f,g)$ such that the corresponding max-plus half-space 
$\set{x\in \rmax^3}{\oplus_i f_i x_i \leq \oplus_j g_j x_j}$ contains $V$. 
For example, if we take $f=(2,\zero,\zero)^T$ and $g=(\zero, 0, 3)^T$, 
then $(f,g)\in W$ because $a$, $b$, $c$ and hence $V$ 
are contained in the half-space 
$\set{x\in \rmax^3}{2x_1 \leq x_2 \oplus 3x_3}$, which is 
represented by the unbounded light gray region of Figure~\ref{polar}. 

In this case there exist eight max-plus linear inequalities 
$\oplus_i f_i x_i \leq \oplus_j g_j x_j$ which satisfy that $V$ 
is the intersection of the corresponding half-spaces. These inequalities, 
which are associated with elements of $W$ as explained above, 
are the following: 
$x_2 \leq 2 x_1$, 
$x_2 \leq 4 x_3$, 
$x_3 \leq 2 x_2$, 
$x_3 \leq 1 x_1$, 
$x_1 \leq 1 x_2$, 
$x_1 \leq 2 x_3$, 
$1 x_1 \leq 1 x_2 \oplus x_3$ and 
$2 x_1 \leq  x_2 \oplus 3 x_3$. 
The boundaries of the corresponding eight half-spaces have been represented in 
Figure~\ref{polar}. Only the half-space corresponding to the last 
inequality  has been depicted completely (note that this inequality 
is the one associated with the pair of vectors $(f,g)\in W$ 
considered above). 
\end{example}

Since the purpose of this section is to establish separation theorems 
for complete congruences and polar cones, in the rest of this section 
we will assume that $\cK$ is a complete idempotent 
semiring and that $X$ is a complete right $\cK$-semimodule. 

Given a complete pre-congruence $W\subset X^2$, for $g\in X$, we set
\begin{align}
\eme{g}:= \vee \set{f\in X}{(f,g)\in W} \; .
\end{align}
Observe that $(\eme{g},g)\in W$ because $W$ is complete. Moreover,  
since $(g,g)\in W$, it follows that $g\leq \eme{g}$ for all $g\in X$. 

When $W$ is a congruence, 
$\eme{g}$ is nothing but the maximal element in the 
equivalence class of $g$ modulo $W$. Therefore, in this 
case we have the following obvious properties, which hold
for all $g,f\in X$,
\begin{align}
g\leq \eme{g} = \eme{\eme{g}} \; ,\label{e-ob2}\\
(f,g)\in W \iff \eme{f}=\eme{g}\label{e-ob3}
\; .
\end{align}

We shall use the fact that
\begin{align}
f\leq g\implies \eme{f} \leq \eme{g} \; .\label{e-barinc}
\end{align}
Indeed, if $f\leq g$ then $g=f\oplus g$, 
and if $h\in X$ is such that $(h,f)\in W$,
we have $(h\oplus g,g)=(h\oplus g, f\oplus g)=(h,f)\oplus (g,g)\in W$ because
$W$ is a pre-congruence. It follows that $\eme{g}\geq h\oplus g\geq h$.
Since this holds for all $h\in X$ such that $(h,f)\in W$,
we deduce that $\eme{g}\geq \eme{f}$, which shows~\eqref{e-barinc}.

\begin{lemma}\label{lem-1}
Let $W\subset X^2$ be a complete pre-congruence. Then,
\begin{align*}
(f,g)\in W \implies g\backslash \eme{h}\leq f \backslash \eme{h} \label{e-sym1}
\end{align*}
for all $h\in X$.
\end{lemma}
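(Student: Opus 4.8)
The plan is to establish the key inequality $g\backslash \eme{h}\leq f\backslash \eme{h}$ by showing that whenever $(f,g)\in W$, every scalar $\lambda$ that "fits under $g$ with respect to $\eme h$" also fits under $f$; that is, $g\lambda\leq \eme h$ implies $f\lambda\leq \eme h$. Since $g\backslash\eme h=\vee\set{\lambda\in\cK}{g\lambda\leq\eme h}$ by definition of the residual, and similarly for $f\backslash\eme h$, this pointwise domination of the defining sets immediately yields the desired inequality between the suprema.

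So fix $(f,g)\in W$, fix $h\in X$, and take $\lambda\in\cK$ with $g\lambda\leq\eme h$. First I would observe that, because $W$ is a complete subsemimodule of $X^2$, it is stable under the right action of $\cK$; hence $(f\lambda,g\lambda)=(f,g)\lambda\in W$. Next, since $g\lambda\leq\eme h$, property~\eqref{e-barinc} gives $\eme{g\lambda}\leq\eme{\eme h}$, and I would want $\eme{\eme h}=\eme h$ — here I must be slightly careful, since the idempotency property~\eqref{e-ob2} was stated for congruences rather than general pre-congruences, so instead I would argue directly: $(f\lambda,g\lambda)\in W$ together with $(g\lambda,\eme h)\in W$ (which follows since $g\lambda\leq\eme h$ and $(\eme h,\eme h)\in W$, using the pre-congruence form~\eqref{Def2Polar} — or rather, combining $g\lambda\leq\eme h$ with transitivity after noting $(\eme{h},\eme{h})\in W$ and the monotonicity argument underlying~\eqref{e-barinc}) shows $(f\lambda,\eme h)\in W$ by transitivity of $W$. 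Consequently $f\lambda\leq\eme{\eme h}$, and since $(\eme h,\ )$-pairs in $W$ are bounded above by $\eme{\eme h}$, and $\eme h$ itself is the join of all first coordinates over $\eme h$, we get $f\lambda\leq\eme h$ after one more application of $g\leq\eme g$ at $\eme h$. I would streamline this to: $(f\lambda,\eme h)\in W\implies f\lambda\leq\eme{\eme h}=\vee\set{u}{(u,\eme h)\in W}$, and then observe $\eme h\leq\eme{\eme h}$, with the reverse $\eme{\eme h}\leq\eme h$ coming from transitivity (any $(u,\eme h)\in W$ combined with $(\eme h,h)\in W$ gives $(u,h)\in W$, hence $u\leq\eme h$).

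The main obstacle I anticipate is precisely this last point: verifying $\eme{\eme h}=\eme h$ for a \emph{pre}-congruence, since the clean statement~\eqref{e-ob2} was only recorded for congruences. The resolution, as sketched, is that $(\eme h,h)\in W$ by completeness of $W$, so transitivity of the pre-congruence collapses the "second iterate" back down: if $(u,\eme h)\in W$ then $(u,h)\in W$, whence $u\leq\eme h$; taking the join over all such $u$ gives $\eme{\eme h}\leq\eme h$, and the reverse is the trivial $\eme h\leq\eme{\eme h}$ from $\eme h\le\eme{\eme h}$ being an instance of $g\le\eme g$. With that in hand, $f\lambda\leq\eme h$ follows, so $\lambda\in\set{\mu}{f\mu\leq\eme h}$, and taking the supremum over all admissible $\lambda$ completes the argument.
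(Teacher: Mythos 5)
Your overall strategy (show that every $\lambda$ with $g\lambda\leq\eme h$ also satisfies $f\lambda\leq\eme h$, then compare the suprema defining the residuals) is sound, and your observation that $\eme{\eme h}=\eme h$ can be recovered for pre-congruences by composing $(u,\eme h)\in W$ with $(\eme h,h)\in W$ is correct. The gap is in the central step: you justify $(g\lambda,\eme h)\in W$ by appealing to~\eqref{Def2Polar}, but~\eqref{Def2Polar} is equivalent (for pre-congruences) to the \emph{polar cone} axiom~\eqref{Def1Polar}, which a general pre-congruence need not satisfy. The intermediate claim already fails for $W=\set{(u,u)}{u\in X}$, the diagonal, which is a complete congruence: there $\eme h=h$, and $g\lambda\leq h$ certainly does not force $(g\lambda,h)\in W$, i.e.\ $g\lambda=h$. (The lemma itself of course still holds there; it is only your intermediate step that breaks.) Your fallback phrase (``combining $g\lambda\leq\eme h$ with transitivity \dots\ and the monotonicity argument underlying~\eqref{e-barinc}'') points in the right direction but is not carried out, and no transitivity argument can produce $(g\lambda,\eme h)\in W$ in general.

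The repair --- which is exactly what the paper does --- is to pad \emph{both} coordinates with $\eme h$ rather than trying to relate $g\lambda$ to $\eme h$ inside $W$: since $W$ is a subsemimodule containing the diagonal, $(\eme h\oplus f\lambda,\ \eme h\oplus g\lambda)=(\eme h,\eme h)\oplus(f,g)\lambda\in W$, and $g\lambda\leq\eme h$ gives $\eme h\oplus g\lambda=\eme h$, so $(\eme h\oplus f\lambda,\eme h)\in W$ with no polar-cone hypothesis. Composing with $(\eme h,h)\in W$ (which holds by completeness) and transitivity yields $(\eme h\oplus f\lambda,h)\in W$, hence $f\lambda\leq\eme h\oplus f\lambda\leq\eme h$ by the defining maximality of $\eme h$. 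With this substitution for your problematic step, the rest of your argument --- including taking the supremum over $\lambda$, or simply setting $\lambda:=g\backslash\eme h$ at the outset as the paper does --- goes through, and your direct verification of $\eme{\eme h}\leq\eme h$ becomes unnecessary.
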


\begin{proof}
Let $\lambda:=g\backslash \eme{h}$,
so that $\eme{h}\geq g\lambda $,
or equivalently $\eme{h} =\eme{h} \oplus g\lambda $.
Since $W$ is a pre-congruence, from  
$(\eme{h}\oplus f\lambda ,\eme{h})=
(\eme{h} \oplus f\lambda ,\eme{h}\oplus g\lambda )= 
(\eme{h},\eme{h}) \oplus (f,g)\lambda \in W$ and 
$(\eme{h},h)\in W$, we deduce that 
$(\eme{h} \oplus f\lambda,h)\in W$
and so $f\lambda \leq \eme{h}\oplus f\lambda \leq \eme{h}$. Hence, 
$f\backslash \eme{h} \geq \lambda$, i.e. 
$f\backslash \eme{h}\geq g\backslash \eme{h}$.
\end{proof} 

Now it is possible to prove the following separation theorem for complete 
polar cones.

\begin{theorem}\label{th-1}
Let $W\subset X^2$ be a complete polar cone. Assume that $s,t\in X$ 
are such that $(s,t)\not \in W$. Then, there exists $x\in X$ such that 
\[
(f,g)\in W \implies g\backslash x\leq f\backslash x
\]
and 
\[
t\backslash x \not \leq s\backslash x \; .
\]
\end{theorem}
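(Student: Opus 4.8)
The plan is to take as separating element the top of the ``$W$-lower set'' of $t$, namely
\[
x:=\eme{t}=\vee\set{f\in X}{(f,t)\in W}\;.
\]
The reason for this choice is that $\eme{t}$ simultaneously carries two facts we need: the tautological membership $(\eme{t},t)\in W$ (valid because $W$ is complete), and, through Lemma~\ref{lem-1}, the order-reversing behaviour of residuals along $W$.

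First I would dispatch the first assertion. A complete polar cone is in particular a complete pre-congruence, so Lemma~\ref{lem-1} applies; taking $h=t$ there gives, for every $(f,g)\in W$, that $g\backslash\eme{t}\leq f\backslash\eme{t}$, i.e.\ $g\backslash x\leq f\backslash x$. So this half is immediate.

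The substance is the strict inequality $t\backslash x\not\leq s\backslash x$, which I would prove by contradiction. Since $(t,t)\in W$ we have $t\leq\eme{t}=x$, hence $t\cdot e=t\leq x$, so the unit $e$ lies in $\set{\mu\in\cK}{t\mu\leq x}$ and therefore $e\leq t\backslash x$. If moreover $t\backslash x\leq s\backslash x$, then $e\leq s\backslash x$, and using the monotonicity of $\mu\mapsto s\mu$ together with the defining inequality $s(s\backslash x)\leq x$, we get $s=s\cdot e\leq s(s\backslash x)\leq x=\eme{t}$. Now apply the reformulation~\eqref{Def2Polar} of the polar-cone axiom with $s\leq\eme{t}$ and $(\eme{t},t)\in W$: it forces $(s,t)\in W$, contradicting the hypothesis $(s,t)\notin W$. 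Hence $t\backslash x\not\leq s\backslash x$, and $x=\eme{t}$ has the two required properties.

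The hard part is not the calculation, which is short residuation bookkeeping (monotonicity of $L_s$, the inequality $s(s\backslash x)\leq x$, and the fact that $e\leq t\backslash\eme{t}$), but rather recognizing that $\eme{t}$ is the right object to separate with and that the polar-cone hypothesis enters precisely through~\eqref{Def2Polar} to close the loop. An alternative candidate such as $\eme{s}$ would still satisfy the first property by Lemma~\ref{lem-1}, but would not obviously deliver the required strictness, since the contradiction crucially uses $(\eme{t},t)\in W$.
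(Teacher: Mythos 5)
Your proof is correct and follows essentially the same route as the paper's: the separating element is $x=\eme{t}$, the first assertion comes from Lemma~\ref{lem-1}, and the strictness is obtained by the same contradiction via $e\leq t\backslash\eme{t}$, $s\leq\eme{t}$, and the polar-cone axiom applied to $(\eme{t},t)\in W$. The only difference is a trivial reordering of the residuation steps.
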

\begin{remark}
In other words, Theorem~\ref{th-1} says that the set 
\[ 
M:=\set{(f,g)\in X^2}{g\backslash x\leq f\backslash x} 
\]
{\em separates} the complete polar cone $W$ and the point $(s,t)$: 
$W \subset M$ and $(s,t)\not \in M$. 
\end{remark}
\begin{proof}[Proof of Theorem~\ref{th-1}]
We will show that the assertion of the theorem holds with $x:=\eme{t}$. 

In the first place, note that if $x=\eme{t}$, then by Lemma~\ref{lem-1} 
the first assertion of the theorem is satisfied. 
Assume that $t\backslash \eme{t} \leq s\backslash \eme{t} $. Then, we get 
$s(t\backslash \eme{t})\leq s(s\backslash \eme{t})\leq \eme{t}$. 
Since $\eme{t}\geq t$, we have $t \backslash \eme{t} \geq e$, and so
$s\leq \eme{t}$. Thus, from $s\leq \eme{t}$ and $(\eme{t},t)\in W$, 
it follows that $(s,t)\in W$ because $W$ is a polar cone, 
which contradicts our assumption. 
\end{proof}

In the case of complete congruences, we have the following separation theorem.

\begin{theorem}\label{th-2}
Let $W\subset X^2$ be a complete congruence. 
Assume that $s,t\in X$ are such that $(s,t)\not \in W$. 
Then, there exists $x\in X$ such that 
\[
(f,g)\in W \implies f \backslash x = g \backslash x
\]
and 
\[
s \backslash x\neq t \backslash x \; .
\]
\end{theorem}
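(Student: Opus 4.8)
The plan is to mimic the strategy of Theorem~\ref{th-1}, using the same candidate separator $x:=\eme{t}$, but now exploiting the extra symmetry of a congruence to upgrade the one-sided inequalities of Lemma~\ref{lem-1} to equalities. Concretely, since $W$ is a congruence and hence in particular a complete pre-congruence, Lemma~\ref{lem-1} gives, for $(f,g)\in W$, both $g\backslash \eme{h}\le f\backslash \eme{h}$ and, applying the lemma to $(g,f)\in W$ (symmetry!), $f\backslash \eme{h}\le g\backslash \eme{h}$. Therefore $f\backslash \eme{h}=g\backslash \eme{h}$ for every $h\in X$, and in particular for $h=t$ we obtain $f\backslash x=g\backslash x$ with $x=\eme{t}$. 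This establishes the first assertion of the theorem.

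For the second assertion, I would argue by contradiction exactly as in Theorem~\ref{th-1}. Suppose $s\backslash x = t\backslash x$ with $x=\eme{t}$. Then $s(t\backslash\eme{t})=s(s\backslash\eme{t})\le \eme{t}$; since $\eme{t}\ge t$ forces $t\backslash\eme{t}\ge e$, this yields $s\le\eme{t}$. Now I need the reverse containment at the level of classes: because $W$ is a congruence and not merely a polar cone, I cannot invoke~\eqref{Def1Polar} directly, but I can use~\eqref{e-ob3}, namely $(s,t)\in W \iff \eme{s}=\eme{t}$, together with monotonicity~\eqref{e-barinc}. From $s\le\eme{t}$ and~\eqref{e-barinc} we get $\eme{s}\le\eme{\eme{t}}=\eme{t}$ by~\eqref{e-ob2}. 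It remains to show $\eme{t}\le\eme{s}$, equivalently $t\le\eme{s}$ (since $\eme{s}$ is idempotent under $g\mapsto\eme{g}$ this suffices). This is where the hypothesis $s\backslash x=t\backslash x$ must be used in its other direction: from $t\backslash\eme{t}\le s\backslash\eme{t}$ one gets $t(s\backslash\eme{t})\le t(t\backslash\eme{t})\le\eme{t}$; I would try to leverage $s\backslash\eme{t}=t\backslash\eme{t}\ge e$ to conclude $t\le s(s\backslash\eme{t})\cdot(\text{something})$, but more cleanly: since $s\le\eme{t}$ and $(\eme{t},t)\in W$, I want to run the pre-congruence manipulation of Lemma~\ref{lem-1}'s proof with roles of $s$ and $t$ partially swapped.

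The cleanest route, and the one I would commit to, avoids re-deriving $t\le\eme{s}$ by hand: instead, observe that $(\eme{t},t)\in W$ and $\eme{t}=\eme{\eme{t}}$, so $\eme{t}$ lies in the class of $t$; and we have just shown $s\le\eme{t}$, i.e. $s\oplus\eme{t}=\eme{t}$, so $(s\oplus\eme{t},\eme{t})$ — I claim this pair is in $W$. Indeed $(s\oplus\eme{t},\eme{t})=(s\oplus\eme{t},\,t\oplus\eme{t})$ would follow if $t\le\eme{t}$, which holds; but the pair $(s\oplus\eme{t},t\oplus\eme{t})$ need not obviously be $(s,t)\oplus(\eme{t},\eme{t})$ unless one is careful. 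The honest statement is: $(s,t)\oplus(\eme t,\eme t)=(s\oplus\eme t,\,t\oplus\eme t)=(\eme t,\eme t)\in W$, which gives nothing. So the genuine obstacle is producing, from $s\le\eme{t}$ alone, the relation $(s,t)\in W$ — and indeed for a mere polar cone this is immediate from~\eqref{Def1Polar}, but for a congruence it is \emph{false} in general (e.g. $s=\zero\le\eme t$ always). Hence $x=\eme{t}$ cannot be the right separator for Theorem~\ref{th-2} by this naive argument; the hypothesis $s\backslash x=t\backslash x$ must genuinely be exploited symmetrically. I therefore expect the real proof to either (a) symmetrize the construction, taking $x$ built from \emph{both} $\eme{s}$ and $\eme{t}$ — plausibly $x:=\eme{s}$ if $\eme{s}\not\le\eme{t}$ and $x:=\eme{t}$ otherwise, reducing by the asymmetry $(s,t)\notin W\iff\eme s\neq\eme t$ to the case $\eme{t}\not\ge\eme{s}$, where one shows $t\backslash\eme{t}\neq s\backslash\eme{t}$ directly — or (b) reduce Theorem~\ref{th-2} to Theorem~\ref{th-1} by passing to the polar cone $\overline{\mathrm{pol}}(W)$ generated by $W$ (the "(iii)-closure"), noting that for a congruence $(s,t)\notin W$ implies $(s,t)\notin\overline{\mathrm{pol}}(W)$ or $(t,s)\notin\overline{\mathrm{pol}}(W)$ and applying Theorem~\ref{th-1} to whichever fails. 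The main obstacle is precisely this passage from the one-sided separation to the two-sided one; once the correct $x$ is identified, verifying $f\backslash x=g\backslash x$ on $W$ is routine via the doubled application of Lemma~\ref{lem-1} described above.
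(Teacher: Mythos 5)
Your option (a) is exactly the paper's proof: since $(s,t)\notin W$ gives $\eme{s}\neq\eme{t}$ by~\eqref{e-ob3}, one may assume (say) $\eme{t}\not\leq\eme{s}$ and take $x:=\eme{s}$, the equality $f\backslash x=g\backslash x$ on $W$ following from the doubled application of Lemma~\ref{lem-1} just as you describe. The verification you leave as ``one shows \dots directly'' is precisely the computation from your second paragraph with the roles of $s$ and $t$ swapped and the contradiction redirected: if $s\backslash\eme{s}=t\backslash\eme{s}$ then $t\leq t(s\backslash\eme{s})=t(t\backslash\eme{s})\leq\eme{s}$ (using $s\leq\eme{s}$, hence $s\backslash\eme{s}\geq e$), whence $\eme{t}\leq\eme{\eme{s}}=\eme{s}$ by~\eqref{e-barinc} and~\eqref{e-ob2}, contradicting the case assumption rather than membership in $W$ --- which is exactly how one escapes the obstruction you correctly identified. (Minor slip: the assignment in the first clause of your option (a) is backwards --- it should be $x:=\eme{s}$ when $\eme{t}\not\leq\eme{s}$ --- but your own elaboration of the mirror case gets it right.)
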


\begin{proof}
Since $(s,t)\not \in W$, by~\eqref{e-ob3} we have $\eme{s}\neq \eme{t}$, 
hence, $\eme{s}\not\leq \eme{t}$ and/or $\eme{t}\not\leq \eme{s}$. 
Assume for instance that $\eme{t}\not\leq \eme{s}$. 
Then, we claim that
\[
s \backslash \eme{s} \neq t\backslash \eme{s} \; .
\]
Indeed, if $s \backslash \eme{s} = t\backslash \eme{s}$, 
we get $t(s \backslash \eme{s})= t(t\backslash \eme{s})\leq \eme{s}$. 
Since $\eme{s}\geq s$,
we have $s \backslash \eme{s} \geq e$, and so
$t\leq \eme{s}$. From~\eqref{e-ob2} and~\eqref{e-barinc}, it follows 
that $\eme{t}\leq \eme{\eme{s}}=\eme{s}$,
which contradicts our assumption, so this proves our claim. 
Finally, as $(f,g)\in W$ implies that $(g,f)\in W$ 
because $W$ is a congruence, by Lemma~\ref{lem-1}
the assertion of the theorem holds with $x:=\eme{s}$.
\end{proof}

To recover separation theorems with the usual scalar product form, 
we need to consider reflexive semirings.

\begin{corollary}\label{CorSepRefCon}
Let $W\subset X^2$ be a complete congruence 
and $(Y,X)$ be a pre-dual pair for a 
reflexive semiring $\cK$ such that $Y$ separates $X$. 
If $(s,t)\not \in W$, then, there exists $y\in Y$ such that
\[
(f,g)\in W \implies \bracket{y}{f} =\bracket{y}{g} 
\]
and 
\[
\bracket{y}{s}\neq \bracket{y}{t} \; .
\]
\end{corollary}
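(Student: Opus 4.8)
The plan is to reduce everything to Theorem~\ref{th-2} and then translate the residuation conditions it provides into bracket conditions by means of formula~\eqref{prop1}. Since $W$ is a complete congruence and $(s,t)\notin W$, Theorem~\ref{th-2} furnishes an element $x\in X$ such that $f\backslash x=g\backslash x$ whenever $(f,g)\in W$, and such that $s\backslash x\neq t\backslash x$. I would then simply set $y:={}^-x\in Y$ and claim this is the functional sought.

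The key point is that, because $\cK$ is reflexive and $Y$ separates $X$, the hypotheses of Proposition~\ref{propXclosed} are met, so that every element of $X$ is closed and, for all $u\in X$, one has $u\backslash x=\bracket{{}^-x}{u}^-=\bracket{y}{u}^-$. Specializing $u$ to $f$, $g$, $s$ and $t$, the two conclusions of Theorem~\ref{th-2} become: $(f,g)\in W\implies \bracket{y}{f}^-=\bracket{y}{g}^-$, and $\bracket{y}{s}^-\neq\bracket{y}{t}^-$.

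It then remains to strip off the $(\cdot)^-$ operation on both sides. Here I would invoke reflexivity directly: by definition there is $\varphi\in\cK$ with ${}^-(\lambda^-)=\lambda$ for all $\lambda\in\cK$, so $\lambda\mapsto\lambda^-$ is an injective self-map of $\cK$. Applying ${}^-(\cdot)$ to the first implication yields $\bracket{y}{f}=\bracket{y}{g}$ for all $(f,g)\in W$; and since $\lambda\mapsto\lambda^-$ is in particular well defined, $\bracket{y}{s}^-\neq\bracket{y}{t}^-$ forces $\bracket{y}{s}\neq\bracket{y}{t}$. This gives the corollary.

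I do not anticipate a serious obstacle: the argument is essentially a dictionary translation through~\eqref{prop1}. The only points requiring care are bookkeeping ones: one must apply~\eqref{prop1} with $x$ held fixed as the second (``denominator'') argument of $\backslash$ and with $f,g,s,t$ varying in the first argument — exactly the pattern in which residuals occur in Theorem~\ref{th-2} — and one must check that the hypotheses listed in the corollary ($\cK$ reflexive, $(Y,X)$ a pre-dual pair with $Y$ separating $X$) are literally those needed for Proposition~\ref{propXclosed}, so that the element $x$ produced by Theorem~\ref{th-2} is indeed closed and the formula is available at it.
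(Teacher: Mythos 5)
Your proposal is correct and follows essentially the same route as the paper's own proof: apply Theorem~\ref{th-2} to obtain $x$, translate the residuals via formula~\eqref{prop1} of Proposition~\ref{propXclosed}, and strip the $(\cdot)^-$ using the injectivity of $\lambda\mapsto\lambda^-$ guaranteed by reflexivity, taking $y={}^-x$. The bookkeeping points you flag (the orientation of $\backslash$ in~\eqref{prop1} and the hypotheses of Proposition~\ref{propXclosed}) are handled exactly as in the paper.
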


\begin{proof}
By Theorem~\ref{th-2} and Proposition~\ref{propXclosed} it follows that  
\[
(f,g)\in W \implies \bracket{{}^-x}{f}^- =\bracket{{}^-x}{g}^- 
\]
and 
\[
\bracket{{}^-x}{s}^-\neq \bracket{{}^-x}{t}^- \; , 
\]
for some $x\in X$. Since $\cK$ is reflexive, 
the map $\lambda \mapsto \lambda^-$ is injective, 
and thus we have
\[
(f,g)\in W \implies \bracket{{}^-x}{f} =\bracket{{}^-x}{g} 
\]
and 
\[
\bracket{{}^-x}{s}\neq \bracket{{}^-x}{t} \; .
\]
Therefore, the assertion of the theorem holds with $y:={}^-x$.
\end{proof}

The following corollary of Theorem~\ref{th-1} and 
Proposition~\ref{propXclosed} can be proved along similar lines, 
using the fact that 
\[
\lambda_1\leq \lambda_2 \implies {}^-\lambda_2 \leq {}^-\lambda_1 \; , \;
\lambda_2^-\leq \lambda_1^-
\]
for all $\lambda_1,\lambda_2 \in \cK$.  
 
\begin{corollary}\label{CorSepRefPol}
Let $W\subset X^2$ be a complete polar cone and 
$(Y,X)$ be a pre-dual pair for a reflexive semiring 
$\cK$ such that $Y$ separates $X$. 
If $(s,t)\not \in W$, then, there exists $y\in Y$ such that
\[
(f,g)\in W \implies \bracket{y}{f} \leq \bracket{y}{g} 
\]
and 
\[
\bracket{y}{s}\not \leq \bracket{y}{t} \; .
\]
\end{corollary}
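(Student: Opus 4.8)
The plan is to follow the proof of Corollary~\ref{CorSepRefCon} almost verbatim, replacing Theorem~\ref{th-2} by the polar separation theorem, Theorem~\ref{th-1}, and replacing the injectivity of $\lambda\mapsto\lambda^-$ by the order-reversing properties of $\lambda\mapsto\lambda^-$ and $\lambda\mapsto{}^-\lambda$ together with the reflexivity identities ${}^-(\lambda^-)=\lambda=({}^-\lambda)^-$.

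First I would apply Theorem~\ref{th-1} to the complete polar cone $W$ and the pair $(s,t)\notin W$, obtaining some $x\in X$ with $(f,g)\in W\implies g\backslash x\leq f\backslash x$ and $t\backslash x\not\leq s\backslash x$. Since $\cK$ is reflexive and $Y$ separates $X$, Proposition~\ref{propXclosed} applies and yields $u\backslash x=\bracket{{}^-x}{u}^-$ for every $u\in X$. Writing $y:={}^-x$, the two conclusions of Theorem~\ref{th-1} become $(f,g)\in W\implies \bracket{y}{g}^-\leq\bracket{y}{f}^-$ and $\bracket{y}{t}^-\not\leq\bracket{y}{s}^-$.

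It then remains to strip off the outer $(\cdot)^-$. For the first implication, applying the order-reversing map $\mu\mapsto{}^-\mu$ to $\bracket{y}{g}^-\leq\bracket{y}{f}^-$ gives ${}^-(\bracket{y}{f}^-)\leq{}^-(\bracket{y}{g}^-)$, and reflexivity (${}^-(\mu^-)=\mu$) turns this into $\bracket{y}{f}\leq\bracket{y}{g}$, as required. For the strict part I would argue by contradiction: if $\bracket{y}{s}\leq\bracket{y}{t}$, then applying the order-reversing map $\mu\mapsto\mu^-$ would give $\bracket{y}{t}^-\leq\bracket{y}{s}^-$, i.e.\ $t\backslash x\leq s\backslash x$, contradicting the second conclusion of Theorem~\ref{th-1}. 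Hence $\bracket{y}{s}\not\leq\bracket{y}{t}$, and $y={}^-x$ has all the required properties.

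The argument is essentially mechanical once Theorem~\ref{th-1} and Proposition~\ref{propXclosed} are available, so I do not anticipate a genuine obstacle. The only point requiring care is bookkeeping the two order-reversals induced by the Galois connection $\mu\mapsto\mu^-$, $\mu\mapsto{}^-\mu$, and checking that reflexivity is precisely what permits undoing a single application of $(\cdot)^-$; everything else is the verbatim analogue of the congruence case treated in Corollary~\ref{CorSepRefCon}.
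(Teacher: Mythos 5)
Your proposal is correct and follows exactly the route the paper indicates for this corollary: apply Theorem~\ref{th-1}, rewrite $u\backslash x=\bracket{{}^-x}{u}^-$ via Proposition~\ref{propXclosed}, and strip the outer $(\cdot)^-$ using the antitone maps $\lambda\mapsto\lambda^-$, $\lambda\mapsto{}^-\lambda$ together with reflexivity. The bookkeeping of the two order reversals and the contrapositive argument for the non-inequality are precisely what the paper's hint calls for.
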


Given a pre-dual pair $(Y,X)$, 
we define the following correspondences between 
subsemimodules of $X^2$ and $Y$: 
\begin{align*}
W\subset X^2\mapsto W^\diamond
:=\set{y\in Y}{\bracket{y}{f} \leq \bracket{y}{g} \; , 
\; \forall (f,g)\in W} \; , \\
V\subset Y \mapsto V^\circ 
:=\set{(f,g)\in X^2}{\bracket{y}{f} \leq \bracket{y}{g}\; , 
\; \forall y\in V} \; ,
\end{align*} 
and 
\begin{align*}
W\subset X^2\mapsto W^\top
:=\set{y\in Y}{\bracket{y}{f} = \bracket{y}{g} \; , \; \forall (f,g)\in W} \; , \\
V\subset Y \mapsto V^\bot 
:=\set{(f,g)\in X^2}{\bracket{y}{f} = \bracket{y}{g} \; , \; \forall y\in V} \; .
\end{align*} 
Then, taking $Y=\opp{X}$ and $\bracket{y}{x}=x\backslash y$, 
the universal separation theorem for complete semimodules 
of Section~\ref{SectPrel} (see also~\cite[Th.~13]{cgq02}) 
implies that the following statements are equivalent:
\begin{enumerate}
\item[(i)] $V\subset Y$ is a complete semimodule, 
\item[(ii)] $V=(V^\circ )^\diamond $,
\item[(iii)] $V=(V^\bot)^\top $.
\end{enumerate} 
Now, thanks to the previous separation theorems, 
we can prove the dual result. 

\begin{theorem}\label{ThBipolar}
Let $(Y,X)$ be a pre-dual pair which satisfies the following property: 
If $W\subset X^2$ is a complete polar cone (resp. complete 
congruence) and $(s,t)\not \in W$, 
there exists $y\in Y$ such that
\[
(f,g)\in W \implies \bracket{y}{f} \leq \bracket{y}{g} 
\]
and 
\[
\bracket{y}{s}\not \leq \bracket{y}{t} \; .
\] 
Then, a subsemimodule $W\subset X^2$ is a complete polar cone 
(resp. complete congruence) if, and only if, 
\begin{align*}
W=(W^\diamond )^\circ \; \makebox{(resp. $W=(W^\top)^\bot $)}\; . 
\end{align*}
\end{theorem}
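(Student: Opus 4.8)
The plan is to prove the two implications separately, treating the polar cone case (the congruence case being entirely parallel, replacing inequalities by equalities and invoking Theorem~\ref{th-2} in place of Theorem~\ref{th-1}). The easy direction is to show that any set of the form $W=(W'^\diamond)^\circ$ is a complete polar cone. So first I would check, by a direct computation from the definitions, that for any $V\subset Y$ the set $V^\circ\subset X^2$ is a subsemimodule, is closed under the transitivity rule $(f,g),(g,h)\in V^\circ\implies (f,h)\in V^\circ$ (immediate from transitivity of $\leq$ in $\cK$), contains the diagonal and all pairs with $f\leq g$ (since $\bracket{y}{\cdot}$ is monotone, being linear between naturally ordered semimodules), and is complete (since the bracket is continuous in its right argument, so the defining inequalities pass to arbitrary suprema). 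Hence $V^\circ$ is always a complete polar cone, which in particular applies to $V=W^\diamond$.

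For the converse, suppose $W\subset X^2$ is a complete polar cone. The inclusion $W\subset (W^\diamond)^\circ$ is immediate: if $(f,g)\in W$ then by definition of $W^\diamond$ every $y\in W^\diamond$ satisfies $\bracket{y}{f}\leq\bracket{y}{g}$, which is exactly the condition for $(f,g)\in(W^\diamond)^\circ$. The nontrivial inclusion is $(W^\diamond)^\circ\subset W$, and this is where the separation hypothesis enters. Arguing by contraposition, suppose $(s,t)\notin W$. By the assumed property of the pre-dual pair $(Y,X)$, there exists $y\in Y$ with $\bracket{y}{f}\leq\bracket{y}{g}$ for all $(f,g)\in W$ and $\bracket{y}{s}\not\leq\bracket{y}{t}$. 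The first condition says precisely that $y\in W^\diamond$; the second then witnesses that $(s,t)\notin(W^\diamond)^\circ$. This shows $(W^\diamond)^\circ\subset W$, completing the equality.

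The main subtlety — though not really an obstacle — is keeping the logical quantifiers straight in the separation step: one must read ``$\bracket{y}{f}\leq\bracket{y}{g}$ for all $(f,g)\in W$'' as the membership statement $y\in W^\diamond$, and this is exactly why the hypothesis of Theorem~\ref{ThBipolar} has been stated in the form it has, so that Corollary~\ref{CorSepRefPol} (resp. Corollary~\ref{CorSepRefCon}) can later be fed into it directly. For the congruence variant, the only change is that $V^\bot$ must be checked to be a complete congruence — symmetry and transitivity of equality give the pre-congruence and symmetry axioms, completeness and subsemimodule structure are as before — and then the same contraposition argument using the equality form of the separation property yields $(W^\top)^\bot\subset W$, while $W\subset(W^\top)^\bot$ is again immediate from the definitions. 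I would present the polar case in full and remark that the congruence case is obtained mutatis mutandis.
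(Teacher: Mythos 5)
Your proposal is correct and follows essentially the same route as the paper's proof: observe that any set of the form $V^\circ$ is a complete polar cone (the paper attributes this to right linearity and continuity of $x\mapsto\bracket{y}{x}$, exactly the properties you verify), note that $W\subset (W^\diamond)^\circ$ is immediate, and obtain the reverse inclusion by contraposition from the assumed separation property. The only cosmetic difference is that you spell out the verification that $V^\circ$ is a complete polar cone in more detail than the paper does.
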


\begin{proof}
We only prove the theorem for polar cones. The case of congruences is similar. 

Note that any set of the form $V^\circ $ is a complete polar cone. 
This follows from the fact that the map $x\mapsto \bracket{y}{x}$ 
is right linear and continuous for all $y\in Y$. 

Since the inclusion $W\subset (W^\diamond )^\circ $ is straightforward, 
it suffices to show that $(W^\diamond )^\circ \subset W$. Assume that 
$(s,t)\not \in W$. Then, there exists $y\in Y$ such that 
$\bracket{y}{s}\not \leq \bracket{y}{t}$ and 
$\bracket{y}{f}\leq \bracket{y}{g}$ for all $(f,g)\in W$. 
Since this means that $y\in W^\diamond $ but 
$\bracket{y}{s}\not \leq \bracket{y}{t}$, 
we conclude that $(s,t)\not \in (W^\diamond )^\circ $. 
\end{proof}

Note that, by Theorems~\ref{th-1} and~\ref{th-2}, the 
condition of the previous theorem is satisfied when 
$Y=\opp{X}$ and $\bracket{y}{x}=x\backslash y$, 
and, by Corollaries~\ref{CorSepRefCon} and~\ref{CorSepRefPol}, 
it is also satisfied when $(Y,X)$ is a pre-dual pair for a 
reflexive semiring $\cK$ such that $Y$ separates $X$.

For any subset $U$ of $X^2$, we denote by $\cpol{U}$ 
(resp. $\ccong{U}$) the smallest complete polar 
cone (resp. complete congruence) containing it. 
Then, we have the following corollary. 

\begin{corollary}
Let $(Y,X)$ be a pre-dual pair which satisfies the condition 
in Theorem~\ref{ThBipolar}. Then, for any subset 
$U$ of $X^2$ we have $\cpol{U}=(U^\diamond )^\circ $ 
(resp. $\ccong{U}=(U^\top)^\bot $).
\end{corollary}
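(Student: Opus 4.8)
The plan is to prove the two inclusions $\cpol{U}\subset(U^\diamond)^\circ$ and $(U^\diamond)^\circ\subset\cpol{U}$ separately, using Theorem~\ref{ThBipolar} for the second; the statement about $\ccong{U}$ then follows by the same argument with $(\diamond,\circ)$ replaced by $(\top,\bot)$ and ``polar cone'' replaced by ``congruence''. First I would record the elementary facts that drive the argument. The operations $W\mapsto W^\diamond$ and $V\mapsto V^\circ$ are inclusion-reversing: if $U_1\subset U_2$ then any $y$ satisfying $\bracket{y}{f}\leq\bracket{y}{g}$ for all $(f,g)\in U_2$ a fortiori satisfies it for all $(f,g)\in U_1$, so $U_2^\diamond\subset U_1^\diamond$, and symmetrically for $V\mapsto V^\circ$. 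Moreover $U\subset(U^\diamond)^\circ$ for every $U\subset X^2$, directly from the definitions, since $(f,g)\in U$ forces $\bracket{y}{f}\leq\bracket{y}{g}$ for all $y\in U^\diamond$. Finally, as already noted in the proof of Theorem~\ref{ThBipolar}, any set of the form $V^\circ$ is a complete polar cone, because $x\mapsto\bracket{y}{x}$ is right linear and continuous for each $y\in Y$; in particular $(U^\diamond)^\circ$ is a complete polar cone, and it contains $U$ by the previous remark.

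With these facts the proof is short. Since $(U^\diamond)^\circ$ is a complete polar cone containing $U$ and $\cpol{U}$ is by definition the smallest such cone, we get $\cpol{U}\subset(U^\diamond)^\circ$. For the reverse inclusion put $W:=\cpol{U}$; from $U\subset W$ and the inclusion-reversing property we obtain $W^\diamond\subset U^\diamond$, hence $(U^\diamond)^\circ\subset(W^\diamond)^\circ$. Since $W$ is a complete polar cone, the standing hypothesis on $(Y,X)$ lets us apply Theorem~\ref{ThBipolar} to get $W=(W^\diamond)^\circ$, so $(U^\diamond)^\circ\subset W=\cpol{U}$. Combining the two inclusions gives $\cpol{U}=(U^\diamond)^\circ$.

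I do not anticipate a real obstacle here: this is the standard closure-operator argument attached to the Galois connection $(\diamond,\circ)$, with all the substantive content already packaged in Theorem~\ref{ThBipolar}. The only points deserving a line of justification are that $\cpol{U}$ genuinely exists --- an arbitrary intersection of complete polar cones is again one, since completeness together with the conditions $(f,f)\in W$, transitivity and $f\leq g\Rightarrow(f,g)\in W$ are all stable under intersection, and $X^2$ is itself a complete polar cone containing $U$ --- and that $(U^\diamond)^\circ$ is complete as a subsemimodule, which is precisely the continuity remark borrowed from the proof of Theorem~\ref{ThBipolar}. The congruence version is verbatim the same, using instead that each $V^\bot$ is a complete congruence (its defining relation $\bracket{y}{f}=\bracket{y}{g}$ being symmetric, reflexive and transitive) and the congruence half of Theorem~\ref{ThBipolar}.
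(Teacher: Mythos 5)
Your proof is correct and follows essentially the same route as the paper: show $(U^\diamond)^\circ$ is a complete polar cone containing $U$, then use the antitone property of $\diamond$ and $\circ$ together with Theorem~\ref{ThBipolar} applied to a complete polar cone $W\supset U$ to conclude it is the smallest one. Your extra remark justifying the existence of $\cpol{U}$ via stability of the defining conditions under intersection is a harmless addition the paper leaves implicit.
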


\begin{proof}
Since any set of the form $V^\circ $ is a complete polar cone, 
it follows that $(U^\diamond )^\circ $ is a complete polar cone 
which clearly contains $U$. 

Let $W$ be a complete polar cone containing $U$. Then, 
\[ 
U\subset W \implies W^\diamond \subset U^\diamond 
\implies (U^\diamond )^\circ \subset (W^\diamond )^\circ \; ,
\] 
and by Theorem~\ref{ThBipolar} we have 
$(W^\diamond )^\circ=W$. 
Therefore, $(U^\diamond )^\circ$ is the smallest complete polar cone 
containing $U$. 

The case of complete congruences can be proved along the same lines.
\end{proof}

\begin{example}\label{Illust}
When $\cK=\rmaxb$ and $X=\rmaxb^n$, note that if we consider 
the bracket $\bracket{y}{x}:=\oplus_i y_i x_i$, 
Corollary~\ref{CorSepRefPol} can be stated as follows: 
If $W\subset (\rmaxb^n)^2$ 
is a complete polar cone and $s,t\in \rmaxb^n$ 
are such that $(s,t)\not \in W$, 
then, there exists $y\in \rmaxb^n$ such that 
\[
(f,g)\in W\implies \oplus_i y_i f_i \leq \oplus_j y_j g_j
\]
and 
\[
\oplus_i y_i s_i \not \leq \oplus_j y_j t_j \; .
\] 
For instance, consider the polar cone $W:=V^\circ $, 
where $V$ is the subsemimodule of $\rmaxb^3$ 
generated by the vectors $a=(2,0,3)^T$, $b=(2,1,0)^T$ and 
$c=(2,4,0)^T$. This semimodule  
is represented by the bounded dark gray region of Figure~\ref{figuresemi} 
together with the segment joining the point $a=(2,0,3)^T$ to it. 

\begin{figure}
\begin{center}

\begin{picture}(0,0)%
\includegraphics{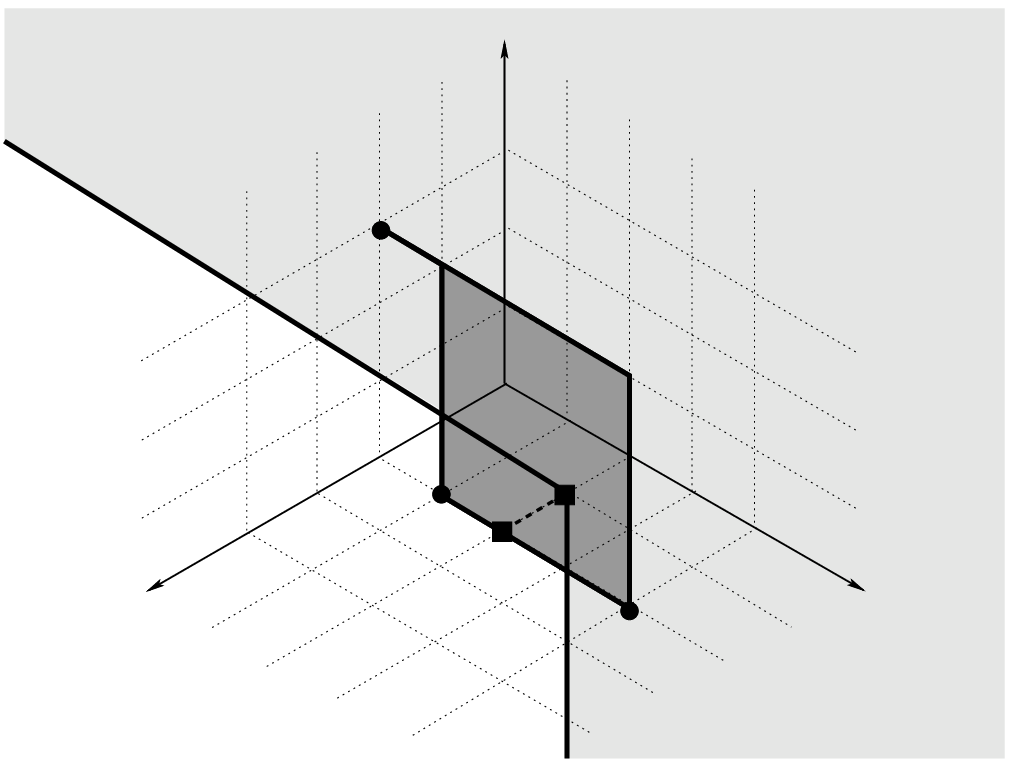}%
\end{picture}%
\setlength{\unitlength}{1973sp}%
\begingroup\makeatletter\ifx\SetFigFont\undefined%
\gdef\SetFigFont#1#2#3#4#5{%
  \reset@font\fontsize{#1}{#2pt}%
  \fontfamily{#3}\fontseries{#4}\fontshape{#5}%
  \selectfont}%
\fi\endgroup%
\begin{picture}(9645,7245)(1157,-7605)
\put(2401,-5686){\makebox(0,0)[lb]{\smash{{\SetFigFont{8}{9.6}{\rmdefault}{\mddefault}{\updefault}{\color[rgb]{0,0,0}$x_1$}%
}}}}
\put(9226,-5686){\makebox(0,0)[lb]{\smash{{\SetFigFont{8}{9.6}{\rmdefault}{\mddefault}{\updefault}{\color[rgb]{0,0,0}$x_2$}%
}}}}
\put(5551,-1036){\makebox(0,0)[lb]{\smash{{\SetFigFont{8}{9.6}{\rmdefault}{\mddefault}{\updefault}{\color[rgb]{0,0,0}$x_3$}%
}}}}
\put(7126,-6436){\makebox(0,0)[lb]{\smash{{\SetFigFont{8}{9.6}{\rmdefault}{\mddefault}{\updefault}{\color[rgb]{0,0,0}$c$}%
}}}}
\put(4951,-5161){\makebox(0,0)[lb]{\smash{{\SetFigFont{8}{9.6}{\rmdefault}{\mddefault}{\updefault}{\color[rgb]{0,0,0}$b$}%
}}}}
\put(5716,-5851){\makebox(0,0)[lb]{\smash{{\SetFigFont{8}{9.6}{\rmdefault}{\mddefault}{\updefault}{\color[rgb]{0,0,0}${}^-\eme{t}$}%
}}}}
\put(6751,-4186){\makebox(0,0)[lb]{\smash{{\SetFigFont{8}{9.6}{\rmdefault}{\mddefault}{\updefault}{\color[rgb]{0,0,0}$V$}%
}}}}
\put(4426,-2461){\makebox(0,0)[lb]{\smash{{\SetFigFont{8}{9.6}{\rmdefault}{\mddefault}{\updefault}{\color[rgb]{0,0,0}$a$}%
}}}}
\put(7651,-6961){\makebox(0,0)[lb]{\smash{{\SetFigFont{8}{9.6}{\rmdefault}{\mddefault}{\updefault}{\color[rgb]{0,0,0}$1x_1\leq x_2\oplus 2x_3$}%
}}}}
\end{picture}%

\end{center}
\caption{Illustration of the separation theorem for complete polar cones.}
\label{figuresemi} 
\end{figure}

If we take $s=(1,\zero ,\zero)^T$ and $t=(\zero ,0,2)^T$, 
then $(s,t)\not \in W$ because the inequality 
$1 x_1\leq x_2 \oplus 2 x_3$ is not satisfied 
by the element $x=b=(2,1,0)^T\in V$. 
Let us compute the vector ${}^-\eme{t}$ which, 
according to Theorem~\ref{th-1} and Corollary~\ref{CorSepRefPol}, 
should separate $(s,t)$ from $W$. 
By definition, 
\[
\eme{t}=\vee \set{f\in \rmaxb^3}{(f,t)\in W} = 
\vee \set{f\in \rmaxb^3}{V \subset H_f}\; , 
\] 
where 
\[ 
H_f:=\set{x\in \rmaxb^3}{\oplus_i f_i x_i \leq \oplus_j t_j x_j} = 
\set{x\in \rmaxb^3}{f_1 x_1\oplus f_2 x_2 \oplus f_3 x_3 \leq x_2 \oplus 2 x_3}
.
\] 
Note that if we define $I:=\set{1\leq i \leq 3}{f_i > t_i}$ and 
$J:= \left\{ 1,2,3 \right\}\setminus I$, we have 
\[ 
H_f=\set{x\in \rmaxb^3}{\oplus_{i\in I} f_i x_i \leq \oplus_{j\in J} t_j x_j}
\; . 
\] 
Then, we can have the following cases depending on the vector $f$: 
\begin{itemize}
\item[ - ] If $f_2\leq 0$ and $f_3 \leq 2$, then 
$H_f=\set{x\in \rmaxb^3}{f_1 x_1 \leq x_2 \oplus 2x_3}$. 
Note that the inequality $f_1 x_1 \leq x_2 \oplus 2x_3$  is 
satisfied by the generators $a$, $b$ and $c$ of $V$ if, 
and only if, $f_1\leq 0$. 
Therefore, since $H_f$ is a semimodule, it follows that 
$V\subset H_f$ if, and only if, $f_1\leq 0$, 
which shows that $\eme{t}\geq (0,0,2)^T$. 
\item[ - ] If $f_2 > 0$ and $f_3 \leq 2$, then 
$H_f  =  \set{x\in \rmaxb^3}{f_1 x_1\oplus f_2 x_2 \leq 2 x_3}$. 
Note that $(2,4,0)^T $ never belongs to $H_f$ because this 
would imply that $f_2 \leq -2$. Then, in this case, 
$V$ is never contained in $H_f$  because the vector 
$c$ does not belong to $H_f$.  
\item[ - ] If $f_2 \leq 0$ and $f_3 > 2$, then 
$H_f=\set{x\in \rmaxb^3}{f_1 x_1\oplus f_3 x_3 \leq x_2}$. 
Note that $(2,0,3)^T \not \in H_f$ because $f_3 > 2$. 
Therefore, like in the previous case, the set $H_f$ 
never contains $V$ because in particular it does not contain the vector $a$. 
\item[ - ]  If $f_2 > 0$ and $f_3 > 2$,  
the elements of $H_f$ cannot have finite second and third entries. 
Then, in this case, $V$ is never contained in $H_f$.
\end{itemize}
Therefore, we conclude that $\eme{t}=(0,0,2)^T$ and so 
${}^-\eme{t}=-\eme{t}=(0,0,-2)^T$. Since 
${}^-\eme{t}=(-2)b\oplus(-4) c \in V$, it follows that 
$\bracket{{}^-\eme{t}}{f} \leq \bracket{{}^-\eme{t}}{g}$ for all 
$(f,g)\in W$. However,  
$\bracket{{}^-\eme{t}}{s} = 1 > 0 =\bracket{{}^-\eme{t}}{t}$ 
and thus ${}^-\eme{t}$ separates $(s,t)$ from $W$. 
\end{example}

A natural way to define a congruence $W$ is to take a continuous right linear
map $F$ from $X$ to a complete right $\cK$-semimodule $Z$ and to define 
$W=\ker F:=\set{(u,v)\in X^2}{F(u)=F(v)}$. Then, since $F$ is continuous, 
$W$ is complete and it satisfies
\[
\eme{u}=\vee \set{v\in X}{F(v)=F(u)}=
\vee \set{v\in X}{F(v)\leq F(u)}=
F^\sharp (F (u)) \; ,
\]
for all $u\in X$. Conversely, it can be checked that every
complete congruence $W$ arises in this way.
It suffices to take for $F$ the canonical morphism
from $X$ to its quotient by the equivalence relation $W$. The details
are left to the reader. 

\begin{remark}
A basic special case is when $X=\rmaxb^n$ and $F$ is a continuous
(right) linear map from $X$ to $\rmaxb^p$, so that $F$ can be represented
by a $p\times n$ matrix $A=(a_{ij})$ with entries in $\rmaxb$,
meaning that the $i$-th coordinate of $F$ is given by 
$F_i(x)=\max_j (a_{ij}+x_j)$. 
The residual of $F$ is the min-plus linear map represented
by the matrix $A$ transposed and changed of sign $(-a_{ji})$, 
$F^\sharp_j(y)=\min_i(-a_{ij}+y_i)$, 
with the convention that $+\infty$ is absorbing for the (usual) addition. 
Hence, the maximal representative $\eme{u}=F^\sharp\circ F(u)$ 
is determined by a simple ``min-max'' combinatorial formula. 
\end{remark}

\section{Separation theorems for the max-plus semiring}

In the preceding section, the semimodules, polar cones, and congruences 
were required to be complete, which is the same
as being closed in the Scott topology. This topology is 
adapted when the underlying semiring is
the completed max-plus semiring, $\rmaxb$ 
(we refer the reader to~\cite{akian,akiansinger} for a discussion of this topology within the max-plus setting). However,
in many applications, the semiring of interest
is rather the non completed max-plus semiring $\rmax$, and the topology
of choice is the standard one, which can be defined
by the metric $d(a,b):=|\exp(a)-\exp(b)|$. Thus, instead
of complete pre-congruences over $\rmaxb^n$, we now deal with
pre-congruences over $\rmax^n$ that are closed
(in the induced product topology), and we would like
to find separation theorems in the spirit of Theorem~\ref{th-1} 
involving closed separating sets.
We may of course consider the restriction to $(\rmax^n)^2$ of 
the separating set constructed in the proof of this theorem, i.e. 
\[ \set{(f,g)\in (\rmax^n)^2}{g\backslash \eme{t}\leq f\backslash \eme{t}} \; ,
\]
but it can be checked that this set is not closed as soon as $\eme{t}$ has
a $-\infty$ coordinate (see the example below). 
In this section, we apply a perturbation technique
to derive separation theorems which are appropriate for closed polar
cones and congruences over $\rmax^n$. 

\begin{example}
Consider again the polar cone $W=V^\circ $ and the pair 
of vectors $(s,t)$ of Example~\ref{Illust}, but assume that we add 
the vector $(\unit, \zero ,\zero )^T$ to the set of generators of $V$. 
Then, it can be checked that with this modification 
we have $\eme{t}=t=(\zero , 0,2)^T$. Therefore, the restriction to 
$(\rmax^3)^2$ of the set that separates $(s,t)$ from $W$ (given by 
Theorem~\ref{th-1}) is 
\begin{align*} 
& \set{(f,g)\in (\rmax^3)^2}
{(+\infty) f_1\oplus f_2 \oplus (-2) f_3 \leq 
(+\infty) g_1\oplus g_2 \oplus (-2) g_3} = 
\\
& \set{(f,g)\in (\rmax^3)^2}{g_1\neq \zero} \; \cup \\
& \; \; \; \; \;  \; \; \; \; \; \; \; \; 
\set{(f,g)\in (\rmax^3)^2}
{f_1 = g_1 = \zero , f_2 \oplus (-2) f_3 \leq g_2 \oplus (-2) g_3 }
\end{align*}
which is not a closed subset of $(\rmax^3)^2$.
\end{example}

Inspired by the previous section, 
if $W\subset (\rmax^n)^2$ is a closed pre-congruence, 
for $g\in \rmax^n$ we set
\begin{align}
\emb{g}:= \sup\set{f\in \rmax^n}{(f,g)\in W}\in \rmaxb^n \; .
\end{align}
Unlike the case of complete pre-congruences, 
observe that $\emb{g}$ may have some entries equal to $+\infty$, 
so $(\emb{g},g)$ need not belong to $W$. 
However, since $(g,g)\in W$, we still 
have $g\leq \emb{g}$ for all $g\in \rmax^n$. 
We shall use the fact that
\begin{align}
g\leq f\implies \emb{g} \leq \emb{f} \; ,\label{e-barinc-Rmax}
\end{align}
which can be proved as in the case of complete pre-congruences. 

Due to the fact that $(\emb{g},g)$ does not necessarily belong to $W$, 
we cannot use the vector $\emb{g}$ in 
the same way we did it before. Therefore, 
we need a perturbation argument in which the following simple 
construction will be very important. 
For $z\in \rmaxb^n$ and $\beta \in \rmax$, 
we define the vector $z^\beta \in \rmax^n$ by: 
\[
z^\beta_i=\left\{
\begin{array}{cl}
\beta  &  \makebox{ if } z_i=+\infty \; , \\
z_i & \makebox{ otherwise.}
\end{array}
\right.  
\]
In other words, $z^\beta$ is obtained from $z$ by replacing its  
$+\infty$ entries by $\beta$. 
We denote by $\uvector_i$ the $i$-th unit vector, i.e. the 
vector defined by $(\uvector_i)_i:=\unit$ and $(\uvector_i)_j:=\zero$ 
if $j\neq i$. We shall need the following lemma. 

\begin{lemma}\label{simpleprop}
Let $W\subset (\rmax^n)^2$ be a congruence and 
$f,g\in \rmax^n$ be such that $(f,g)\in W$. Then, 
$(g,g\oplus \uvector_i\gamma )\in W$ for all $\gamma \leq f_i$. 
\end{lemma}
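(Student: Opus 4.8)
The plan is to avoid any topological argument altogether and to deduce the lemma purely from the closure properties of a congruence, via a one-line perturbation of the pair $(f,g)$. The key elementary observation, which I would record first, is that the hypothesis $\gamma\leq f_i$ is exactly what makes $f$ absorb the perturbation $\uvector_i\gamma$: since $\uvector_i\gamma$ has $i$-th entry $\gamma$ and all other entries equal to $\zero$, we have $f\oplus\uvector_i\gamma=f$, whereas $g\oplus\uvector_i\gamma$ is obtained from $g$ by raising only its $i$-th entry to $\max(g_i,\gamma)$ (so if $\gamma\leq g_i$ the conclusion is trivial, and the content is the case $\gamma>g_i$).

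Next I would add the ``diagonal'' element $(\uvector_i\gamma,\uvector_i\gamma)$ to the element $(f,g)$. The former lies in $W$ because $W$, being a congruence, contains $(h,h)$ for every $h\in\rmax^n$; the latter lies in $W$ by hypothesis. Since $W$ is a subsemimodule of $(\rmax^n)^2$, their sum $(f\oplus\uvector_i\gamma,\ g\oplus\uvector_i\gamma)$ is in $W$, and by the observation above this is $(f,\ g\oplus\uvector_i\gamma)\in W$.

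Finally I would chain this with $(f,g)\in W$: using symmetry of the congruence, $(g,f)\in W$, and transitivity applied to $(g,f)\in W$ and $(f,\ g\oplus\uvector_i\gamma)\in W$ gives $(g,\ g\oplus\uvector_i\gamma)\in W$, which is the assertion. The only real ``obstacle'' is noticing the absorption identity $f\oplus\uvector_i\gamma=f$; once that is in hand, the proof uses nothing beyond reflexivity, symmetry, transitivity and the semimodule structure of $W$. In particular closedness of $W$ is not used here (it will be needed later, when $\emb{g}$ replaces $f$), but symmetry genuinely is, so the hypothesis that $W$ is a congruence rather than merely a pre-congruence cannot be dropped.
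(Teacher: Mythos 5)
Your proof is correct and is essentially identical to the paper's: both exploit the absorption identity $f\oplus\uvector_i\gamma=f$ (valid since $\gamma\leq f_i$) to write $(f,\,g\oplus\uvector_i\gamma)=(f,g)\oplus(\uvector_i,\uvector_i)\gamma\in W$ via reflexivity and the semimodule structure, and then conclude by symmetry and transitivity from $(g,f)\in W$. Your closing remarks on where closedness versus symmetry is actually needed are accurate and match the paper's usage.
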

\begin{proof}
If $\gamma\leq f_i$, since $W$ is a congruence and $(f,g)\in W$, we have 
$(f, g\oplus \uvector_i\gamma)=
(f\oplus \uvector_i\gamma, g\oplus \uvector_i\gamma)=
(f,g)\oplus (\uvector_i, \uvector_i)\gamma \in W$. Hence, 
$(g,g\oplus \uvector_i\gamma)\in W $ because $(g,f)\in W$,
$(f, g\oplus \uvector_i\gamma) \in W$ and $W$ is a congruence. 
\end{proof}

Then we have the following corollary.

\begin{corollary}\label{coro-s1}
Let $W\subset (\rmax^n)^2$ be a closed congruence or a closed 
polar cone. Then, for all $g\in \rmax^n$, 
if $\beta\geq \max_i (g_i)$ we have $({\emb g}^\beta, g)\in W$.
\end{corollary}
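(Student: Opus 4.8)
The plan is to reduce the statement, for both kinds of $W$, to a single coordinatewise fact: \emph{if $W$ is a closed congruence or a closed polar cone, $(f,g)\in W$ and $\gamma\le f_i$, then $(g\oplus\uvector_i\gamma,g)\in W$}. I would prove this first. When $W$ is a congruence, Lemma~\ref{simpleprop} gives $(g,g\oplus\uvector_i\gamma)\in W$, and symmetry of $W$ yields $(g\oplus\uvector_i\gamma,g)\in W$. When $W$ is a polar cone, note $\uvector_i\gamma\le f$ (its $i$-th entry is $\gamma\le f_i$ and the others are $\zero$), so $(\uvector_i\gamma,f)\in W$ by the defining property of polar cones; transitivity with $(f,g)\in W$ gives $(\uvector_i\gamma,g)\in W$, and adding $(g,g)\in W$ (legitimate since $W$ is a subsemimodule) gives $(\uvector_i\gamma\oplus g,\,g\oplus g)=(g\oplus\uvector_i\gamma,g)\in W$.

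Next, I would fix $g$ and $\beta\ge\max_i g_i$ and, for each coordinate $i$, produce a pair $(g\oplus\uvector_i c_i,\,g)\in W$, where $c_i:=\beta$ if $\emb g_i=+\infty$ and $c_i:=\emb g_i$ otherwise. If $\emb g_i=+\infty$, then since $\emb g_i=\sup\set{f_i}{(f,g)\in W}$ there is some $(f,g)\in W$ with $f_i\ge\beta$, and the coordinatewise fact applied with $\gamma=\beta$ does it. If $\emb g_i\in\rmax$, then for every real $\delta>0$ there is $(f,g)\in W$ with $f_i\ge\emb g_i-\delta$, hence $(g\oplus\uvector_i(\emb g_i-\delta),g)\in W$; since $\emb g_i$ is finite, $g\oplus\uvector_i(\emb g_i-\delta)\to g\oplus\uvector_i\emb g_i$ in $\rmax^n$ as $\delta\downarrow 0$, so closedness of $W$ gives $(g\oplus\uvector_i\emb g_i,g)\in W$.

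Finally, I would add these $n$ pairs. Using that $W$ is a subsemimodule,
\[
\bigoplus_{i=1}^n(g\oplus\uvector_i c_i,\,g)=\Big(g\oplus\bigoplus_{i=1}^n\uvector_i c_i,\;g\Big)\in W .
\]
The $i$-th entry of $g\oplus\bigoplus_i\uvector_i c_i$ is $\max(g_i,c_i)$, which equals $\beta$ when $\emb g_i=+\infty$ (this is the one place the hypothesis $\beta\ge\max_i g_i$ is used) and equals $\emb g_i$ when $\emb g_i\in\rmax$ (because $g\le\emb g$). By the definition of $z^\beta$, this vector is exactly $\emb g^\beta$, so $(\emb g^\beta,g)\in W$.

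The step I expect to be the main obstacle is the coordinatewise fact, together with the realization that it is the right tool. In the complete setting one simply uses that $(\eme g,g)\in W$, but here $\emb g$ may have $+\infty$ entries and the supremum defining it need not be attained, so there is no single element of $W$ that one can "clip" down to $\emb g^\beta$. The resolution is to build $\emb g^\beta$ one coordinate at a time — raising a single coordinate of $g$, and invoking topological closedness of $W$ to pass to the limit on the finite coordinates — and then to recombine the pieces by addition, which is legitimate precisely because $W$ is stable under $\oplus$. The congruence case and the polar cone case are treated separately only inside the coordinatewise fact (via Lemma~\ref{simpleprop} and symmetry in the first case, via the polar axiom and transitivity in the second); everything after that is common to both.
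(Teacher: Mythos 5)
Your proposal is correct and follows essentially the same route as the paper: build the pair one coordinate at a time (via Lemma~\ref{simpleprop} and symmetry for congruences, via the polar-cone axiom for polar cones), use topological closedness to reach the finite entries $\emb g_i$ in the limit, and recombine by the semimodule property. The only cosmetic difference is that you keep $g$ in the first component in the polar-cone case as well, which unifies the two cases at the harmless cost of invoking $\beta\geq\max_i g_i$ there too, whereas the paper's polar-cone argument works for arbitrary $\beta$.
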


\begin{proof} 
Let us first consider the case where $W$ is a closed congruence. 
If ${\emb g}_i=+\infty$, there exists $f\in \rmax^n$ 
such that $(f,g)\in W$ and $f_i> \beta$. Then, by Lemma~\ref{simpleprop} 
we have $(g, g\oplus \uvector_i\beta) \in W$. 

Now assume that $\emb{g}_i < +\infty$. Then, if $\gamma <{\emb{g}}_i$, 
there exists $f\in \rmax^n$ such that $(f,g)\in W$ and 
$f_i> \gamma$. Thus, by Lemma~\ref{simpleprop} we have 
$(g,g\oplus \uvector_i \gamma )\in W$. 
Since this holds for all $\gamma\in \rmax$ 
such that $\gamma <{\emb{g}}_i$, 
it follows that $(g,g\oplus \uvector_i \emb{g}_i)\in W$ 
because $W$ is closed. 

Consequently, as $W$ is a semimodule, we have 
\begin{align*}
({\emb{g}}^\beta,g) & = 
((\oplus_{\set{i}{\emb{g}_i <+\infty}} g\oplus \uvector_i \emb{g}_i )\oplus 
(\oplus_{\set{i}{\emb{g}_i = +\infty}} g\oplus \uvector_i \beta ),g) \\ 
& =  (\oplus_{\set{i}{\emb{g}_i <+\infty}}(g\oplus \uvector_i \emb{g}_i,g ))
\oplus (\oplus_{\set{i}{\emb{g}_i = +\infty}}(g\oplus \uvector_i \beta,g )) 
\in W  \; .
\end{align*} 

The case of polar cones is easier (indeed the assertion 
holds for any $\beta \in \rmax$). If $\emb{g}_i=+\infty$, 
there exists $f\in \rmax^n$ such that $(f,g)\in W$ and $f_i> \beta$. Then, 
as $\uvector_i \beta \leq f$ and $(f,g)\in W$, 
it follows that $(\uvector_i \beta ,g)\in W$, 
because $W$ is a polar cone. 

Assume now that $\emb{g}_i < +\infty$. Then, if $\alpha < \emb{g}_i$, 
there exists $f\in \rmax^n$ such that $(f,g)\in W$ and 
$f_i> \alpha$. Therefore, as $\uvector_i \alpha \leq f$ and $(f,g)\in W$, 
we have $(\uvector_i \alpha ,g)\in W$. 
Since this holds for all $\alpha \in \rmax$ such that 
$\alpha < \emb{g}_i$, we deduce that $(\uvector_i \emb{g}_i ,g)\in W$, 
because $W$ is closed. 

Finally, as $W$ is a semimodule, we get 
\begin{align*}
({\emb{g}}^\beta,g) & = 
((\oplus_{\set{i}{\emb{g}_i <+\infty}} \uvector_i \emb{g}_i  )
\oplus (\oplus_{\set{i}{\emb{g}_i = +\infty}} \uvector_i \beta ) ,g) \\ 
& =  ((\oplus_{\set{i}{\emb{g}_i <+\infty}} (\uvector_i \emb{g}_i ,g)  )
\oplus (\oplus_{\set{i}{\emb{g}_i = +\infty}} (\uvector_i \beta ,g) )  
\in W  \; , 
\end{align*}  
which completes the proof of the corollary.
\end{proof}
  
Like in the case of complete pre-congruences, we have:

\begin{lemma}\label{lemma-1-Rmax}
Let $W\subset (\rmax^n)^2$ be a closed congruence or a closed polar cone. 
Then, 
\[
(f,g) \in W \implies g\backslash \emb{h} \leq f \backslash \emb{h}  
\]
for all $h\in \rmax^n$.
\end{lemma}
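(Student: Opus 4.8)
The plan is to adapt the proof of Lemma~\ref{lem-1} (the complete-semimodule case), replacing the troublesome vector $\emb h$ — which need not form an element of $W$ together with $h$ — by its perturbation $\emb h^{\,\beta}$, for which Corollary~\ref{coro-s1} guarantees $(\emb h^{\,\beta},h)\in W$ whenever $\beta\geq \max_i h_i$. Concretely, fix $(f,g)\in W$ and $h\in\rmax^n$, and set $\lambda_\beta:=g\backslash \emb h^{\,\beta}$ for such a $\beta$. By the very definition of the residual we have $g\lambda_\beta\leq \emb h^{\,\beta}$, hence $\emb h^{\,\beta}=\emb h^{\,\beta}\oplus g\lambda_\beta$. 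Exactly as in Lemma~\ref{lem-1}, using that $W$ is a (pre-)congruence resp.\ polar cone, one computes
\[
(\emb h^{\,\beta}\oplus f\lambda_\beta,\emb h^{\,\beta})
=(\emb h^{\,\beta}\oplus f\lambda_\beta,\emb h^{\,\beta}\oplus g\lambda_\beta)
=(\emb h^{\,\beta},\emb h^{\,\beta})\oplus (f,g)\lambda_\beta\in W,
\]
and combining this with $(\emb h^{\,\beta},h)\in W$ (here one must check the argument works both when $W$ is a closed congruence, using symmetry and transitivity, and when $W$ is a closed polar cone, using $\emb h^{\,\beta}\oplus f\lambda_\beta\geq \emb h^{\,\beta}$ together with property~\eqref{Def1Polar} or~\eqref{Def2Polar}) one gets $(\emb h^{\,\beta}\oplus f\lambda_\beta,h)\in W$. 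This yields $f\lambda_\beta\leq \emb h^{\,\beta}\oplus f\lambda_\beta\leq \emb h$ — note the last term is $\emb h$, not $\emb h^{\,\beta}$, because $\emb h$ is the supremum of all first coordinates $f'$ with $(f',h)\in W$ — and therefore $f\backslash \emb h\geq \lambda_\beta=g\backslash \emb h^{\,\beta}$.

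The remaining step is to let $\beta\to+\infty$. As $\beta$ increases, $\emb h^{\,\beta}$ increases and converges entrywise to $\emb h$ in $\rmaxb^n$, so by continuity of the residuation maps $\lambda\mapsto g\backslash\lambda$ on the complete semimodule $\rmaxb^n$ (the map $z\mapsto g\backslash z$ is continuous, being a residual) we obtain $\sup_\beta\, g\backslash \emb h^{\,\beta}=g\backslash\emb h$. Passing to the supremum over $\beta$ in the inequality $f\backslash\emb h\geq g\backslash\emb h^{\,\beta}$ then gives $f\backslash\emb h\geq g\backslash\emb h$, which is precisely the claim. One small point to keep clean: $g\backslash\emb h$ and $f\backslash\emb h$ are a priori elements of $\rmaxb$ (possibly $+\infty$), so the inequality should be read in $\rmaxb$, and the argument above is insensitive to whether these values are finite.

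I expect the only genuine obstacle to be the bookkeeping needed to cover the polar-cone case and the congruence case uniformly, since they require slightly different uses of the defining axioms — in the congruence case one needs $(h,\emb h^{\,\beta})\in W$ and transitivity to conclude $(\emb h^{\,\beta}\oplus f\lambda_\beta,h)\in W$, whereas in the polar-cone case one exploits monotonicity~\eqref{Def2Polar} directly from $(\emb h^{\,\beta},h)\in W$ and $\emb h^{\,\beta}\le \emb h^{\,\beta}\oplus f\lambda_\beta$. Everything else is a routine transcription of Lemma~\ref{lem-1} plus a limiting argument; no new separation input is needed, and Corollary~\ref{coro-s1} does all the heavy lifting that completeness did in the earlier setting.
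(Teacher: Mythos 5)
Your argument is essentially the paper's: both proofs perturb $\emb{h}$ to $\emb{h}^\beta$, invoke Corollary~\ref{coro-s1} to obtain $(\emb{h}^\beta,h)\in W$, repeat the computation of Lemma~\ref{lem-1} (where transitivity of the pre-congruence alone suffices, so no separate case analysis for congruences versus polar cones is needed at that point), and deduce $f\lambda\leq\emb{h}$ from the definition of $\emb{h}$ as a supremum. The only difference is bookkeeping: the paper fixes $\alpha<g\backslash\emb{h}$ first and then chooses $\beta\geq\max_i(g_i\alpha\oplus h_i)$, which makes your final limiting step unnecessary; your justification of that step (``$z\mapsto g\backslash z$ is continuous, being a residual'') is imprecise, since residuals preserve infima rather than suprema, but the identity $\sup_\beta\, g\backslash\emb{h}^\beta=g\backslash\emb{h}$ does hold in this concrete setting, so the proof stands.
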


\begin{proof}
When $g\backslash \emb{h}=-\infty$ the assertion is obvious, so assume that  
$g\backslash \emb{h}\not =-\infty$. Let $\alpha <g\backslash \emb{h}$, 
so that $\emb{h} \geq g\alpha$. Then, if 
$\beta \geq \max_i (g_i\alpha \oplus h_i)$, we have 
$g\alpha \leq \emb{h}^\beta $, and by Corollary~\ref{coro-s1} we know that  
$(\emb{h}^\beta ,h)\in W$. 
Since $W$ is a pre-congruence, from 
$(\emb{h}^\beta \oplus f\alpha ,\emb{h}^\beta ) =
(\emb{h}^\beta \oplus f\alpha ,\emb{h}^\beta \oplus g\alpha ) = 
(\emb{h}^\beta ,\emb{h}^\beta )\oplus (f,g) \alpha \in W$ and 
$(\emb{h}^\beta ,h)\in W$, it follows that 
$(\emb{h}^\beta  \oplus f\alpha , h)\in W $,
and so, $f\alpha \leq \emb{h}^\beta \oplus f\alpha \leq \emb{h}$. 
Therefore, we have $f\backslash \emb{h} \geq \alpha$. Since this holds for 
all $\alpha \in \rmax$ such that $\alpha < g \backslash \emb{h}$, 
we conclude that $g \backslash \emb{h} \leq f \backslash \emb{h}$.   
\end{proof}

The following lemmas will be useful to prove the 
separation theorems for the max-plus semiring.

\begin{lemma}\label{lemma-2-Rmax}
Let $W\subset (\rmax^n)^2$ be a closed pre-congruence. 
Assume that $s,t,h\in \rmax^n$ are such that 
$t\leq h$ and $s\not \leq \emb{h}$. 
Then, we have $t\backslash \emb{h}\not \leq s\backslash \emb{h} $. 
\end{lemma}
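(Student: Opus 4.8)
The plan is to argue by contradiction, using only the elementary residuation facts recalled in Section~\ref{SectPrel}, in the same spirit as the proofs of Theorems~\ref{th-1} and~\ref{th-2} and of Lemma~\ref{lemma-1-Rmax}. Assume, contrary to the claim, that $t\backslash \emb{h}\leq s\backslash \emb{h}$ (recall that $\rmaxb$ is totally ordered, so this is exactly the negation of the asserted inequality). All residuals below are computed in the complete $\rmaxb$-semimodule $\rmaxb^n$, which is legitimate since $\emb{h}\in\rmaxb^n$.

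First I would record the general fact that, for $u\in\rmax^n$ and $x\in\rmaxb^n$, one has $u\backslash x\geq e$ if and only if $u\leq x$: the ``if'' part is immediate from the definition $u\backslash x=\vee\set{\mu}{u\mu\leq x}$ since $ue=u\leq x$, and the ``only if'' part follows from the basic inequality $u(u\backslash x)\leq x$, which gives $u=ue\leq u(u\backslash x)\leq x$ whenever $e\leq u\backslash x$. Applying this with $u=t$ and $x=\emb{h}$: since $W$ is a pre-congruence we have $(h,h)\in W$, hence $h\leq\emb{h}$ by the definition of $\emb{\cdot}$; together with the hypothesis $t\leq h$ this yields $t\leq\emb{h}$, and therefore $t\backslash\emb{h}\geq e$. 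The contradiction hypothesis then forces $s\backslash\emb{h}\geq t\backslash\emb{h}\geq e$.

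Now I would use again the defining inequality of the residual, $s(s\backslash\emb{h})\leq\emb{h}$, together with the monotonicity of the left multiplication $\mu\mapsto s\mu$: from $e\leq s\backslash\emb{h}$ we obtain $s=se\leq s(s\backslash\emb{h})\leq\emb{h}$, i.e.\ $s\leq\emb{h}$, which contradicts the assumption $s\not\leq\emb{h}$. This contradiction shows that $t\backslash\emb{h}\not\leq s\backslash\emb{h}$, as required.

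I do not expect any real obstacle here: the argument is short and purely order-theoretic. The only point deserving a word of care is the bookkeeping with the completed semiring, namely that $\emb{h}$ may have $+\infty$ entries so that all residuals must be interpreted in $\rmaxb$ rather than $\rmax$; once this is granted, the two properties invoked ($u\backslash x\geq e\iff u\leq x$ and $u(u\backslash x)\leq x$) hold verbatim. Note that, apart from guaranteeing that $\emb{h}$ is well defined, closedness of $W$ plays no role in this lemma—it is in the preceding results (Corollary~\ref{coro-s1} and Lemma~\ref{lemma-1-Rmax}) that closedness is genuinely used.
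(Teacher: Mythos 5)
Your proof is correct and follows essentially the same route as the paper's: assume $t\backslash\emb{h}\le s\backslash\emb{h}$, observe $t\le\emb{h}$ so $t\backslash\emb{h}\ge e$, and conclude $s\le s(s\backslash\emb{h})\le\emb{h}$, contradicting $s\not\le\emb{h}$. The only cosmetic difference is that the paper derives $t\le\emb{h}$ via $t\le\emb{t}\le\emb{h}$ using the monotonicity property~\eqref{e-barinc-Rmax}, whereas you use $t\le h\le\emb{h}$ directly; both are valid, and your remark that closedness of $W$ is not actually used in this step is accurate.
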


\begin{proof}
Assume that $t\backslash \emb{h} \leq s\backslash \emb{h} $. Then, we get 
$s (t\backslash \emb{h}) \leq s (s\backslash \emb{h}) \leq \emb{h}$. 
Since $t\leq h$, by~\eqref{e-barinc-Rmax} we have $t\leq \emb{t}\leq \emb{h}$, 
and so $e\leq t\backslash \emb{h}$. Therefore, it follows that 
$s=s e\leq s (t\backslash \emb{h}) \leq \emb{h}$, 
which is a contradiction. 
\end{proof}

\begin{lemma}\label{propimpor}
Let $W\subset (\rmax^n)^2$ be a closed congruence or 
a closed polar cone. Assume that $s,t\in \rmax^n$ 
are such that $s\not \leq \emb{t}$. Then, 
there exists $h\in \rmax^n$ with finite entries 
such that $t\leq h$ and $s\not\leq \emb{h}$.
\end{lemma}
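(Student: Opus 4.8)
The plan is to start from the hypothesis $s\not\leq\emb{t}$, which means there is a coordinate $k$ with $s_k>\emb{t}_k$, and to build the required finite vector $h$ by carefully truncating $\emb{t}$ at a large but finite level while keeping $t\leq h$. Concretely, pick $\beta\in\rmax$ large enough that $\beta\geq\max_i(s_i\oplus t_i)$ and, if $\emb{t}_k<+\infty$, also $\beta>\emb{t}_k$; this is possible since $s$ and $t$ have finite entries. I would then set $h:=\emb{t}^{\,\beta}$, the vector obtained from $\emb{t}$ by replacing every $+\infty$ entry by $\beta$. By construction $h$ has finite entries, and since $t\leq\emb{t}$ and each finite entry of $\emb{t}$ is preserved while the infinite ones are replaced by $\beta\geq t_i$, we get $t\leq h$. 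The crux is then to verify $s\not\leq\emb{h}=\emb{\emb{t}^{\,\beta}}$.

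To control $\emb{h}$, first observe that since $h=\emb{t}^{\,\beta}$ and $\beta\geq\max_i(t_i)$, Corollary~\ref{coro-s1} gives $(h,t)\in W$, so in particular $h\leq\emb{t}$; combining with $t\leq h\leq\emb{t}$ and the monotonicity~\eqref{e-barinc-Rmax} of $g\mapsto\emb{g}$, together with the fact (provable exactly as in the complete case, using completeness/closedness) that $\emb{\emb{g}}=\emb{g}$ — or rather, that $\emb{h}\leq\emb{\emb{t}}$ and $\emb{\emb{t}}$ agrees with $\emb{t}$ on finite coordinates — I would deduce $\emb{h}\leq\emb{t}$ wherever $\emb{t}$ is finite. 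Now split on the coordinate $k$: if $\emb{t}_k<+\infty$, then $\emb{h}_k\leq\emb{t}_k<s_k$, so $s\not\leq\emb{h}$ and we are done. If $\emb{t}_k=+\infty$, then $h_k=\beta$; here I would argue that $\emb{h}_k$ stays bounded, using that $(f,h)\in W$ with $h_k=\beta$ finite forces, via Lemma~\ref{simpleprop} applied in reverse (or via the polar-cone property, in that case), a bound $f_k\leq\beta'$ for some finite $\beta'$ depending only on $\beta$ and the structure near coordinate $k$ — and then choosing $\beta$ (hence the eventual bound) so that it does not reach $s_k$ gives again $s_k>\emb{h}_k$.

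The genuine obstacle is this second case, $\emb{t}_k=+\infty$: a priori truncating the $+\infty$ entry to $\beta$ could cause $\emb{h}_k$ to jump back up, even all the way to $+\infty$, which would destroy $s\not\leq\emb{h}$. Handling it cleanly requires separating the two hypotheses on $W$. When $W$ is a closed polar cone, if $(f,h)\in W$ then $\uvector_k f_k\leq f$ forces $(\uvector_k f_k,h)\in W$, and comparing with $h$ componentwise (using that $h$ is finite and that the polar-cone inequality is an actual max-plus inequality) yields a finite a priori bound on $f_k$ in terms of the finite entries of $h$; one then only needs $\beta$ large enough that this bound is still $<s_k$. When $W$ is a closed congruence, one argues symmetrically with Lemma~\ref{simpleprop}: from $(f,h)\in W$ and $h_k=\beta$ one gets $(h,h\oplus\uvector_k\gamma)\in W$ for $\gamma\leq f_k$, and pushing $\gamma$ past $\beta$ while $h$ stays finite is what would have been inconsistent had $f_k$ been too large — more precisely, it would contradict the already-established fact that $h\leq\emb{t}$ only on the coordinates where $\emb{t}$ was finite, so the infinite coordinate is exactly where the gap persists. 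In both cases the key point is that replacing one $+\infty$ by a finite $\beta$ cannot, by closedness plus the semimodule structure, re-create an infinite saturation at that same coordinate — this monotone-convergence-style argument, exactly parallel to the closedness arguments used in the proof of Corollary~\ref{coro-s1}, is where the real work lies.
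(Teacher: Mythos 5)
Your strategy --- truncate $\emb{t}$ at a finite level $\beta$, use Corollary~\ref{coro-s1} to get $(\emb{t}^{\,\beta},t)\in W$, and then control $\emb{h}$ for $h=\emb{t}^{\,\beta}$ --- is genuinely different from the paper's, but the part you flag as the ``crux'' is not where the difficulty lies. Any coordinate $k$ witnessing $s\not\leq\emb{t}$ satisfies $s_k>\emb{t}_k$ with $s_k\in\rmax$, so automatically $\emb{t}_k<+\infty$: your second case never occurs. And once $(h,t)\in W$ is known, the bound $\emb{h}\leq\emb{t}$ follows in one line from transitivity of the pre-congruence ($(f,h)\in W$ and $(h,t)\in W$ imply $(f,t)\in W$, hence $f\leq\emb{t}$); there is no need to invoke $\emb{\emb{t}}$, which is not even defined when $\emb{t}$ has $+\infty$ entries.

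The genuine gap is elsewhere: $h=\emb{t}^{\,\beta}$ need not have finite entries. The operation $z\mapsto z^{\beta}$ only replaces $+\infty$ entries, and $\emb{t}$ can perfectly well have $-\infty$ entries (take $W$ to be the diagonal congruence and $t$ with a $-\infty$ coordinate, so that $\emb{t}=t$). Finiteness of $h$ is not cosmetic: it is what later guarantees $\emb{h}_i\geq h_i>-\infty$, so that $y=-\emb{h}$ lies in $\rmax^n$ in Theorems~\ref{TSepPolRmax} and~\ref{TSepConRmax}. Nor can you simply raise the $-\infty$ entries of $h$ to some finite value, since that destroys the relation $(h,t)\in W$ on which your entire control of $\emb{h}$ rests, and monotonicity~\eqref{e-barinc-Rmax} then only bounds $\emb{h}$ from below. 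This is exactly the difficulty the paper's proof is built to handle: it argues by contradiction, takes a decreasing sequence of finite vectors $h_k$ converging to $t$, extracts the monotone limit $z$ of the $\emb{h}_k$, and uses Corollary~\ref{coro-s1} together with the closedness of $W$ to pass to the limit and conclude $s\leq z\leq\emb{t}$. Your single-truncation argument does give a correct and shorter proof in the special case where $\emb{t}$ has no $-\infty$ entries, but in general some such limiting device is required; as written, the proposal does not prove the lemma.
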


\begin{proof}
Assume that $s\leq \emb{h}$ for all $h\in \rmax^n$ 
with finite entries that satisfy $t\leq h$. Let 
$\{ h_k \}_{k\in \N}\subset \rmax^n$ be a decreasing sequence of vectors
with finite entries such that $\lim_{k\rightarrow \infty} h_k = t$. 
Then, as by~\eqref{e-barinc-Rmax} the sequence 
$\{\emb{h}_k \}_{k\in \N}\subset \rmaxb^n$ is also decreasing, 
there exists $z\in \rmaxb^n$ such that 
$\lim_{k\rightarrow \infty}\emb{h}_k = z$. Since $s\leq \emb{h}_k$ 
for all $k\in \N$, we have $s\leq z$. Note that we can assume, 
without loss of generality, that 
$(\emb{h}_k)_i=+\infty \iff z_i=+\infty$. 
If $\lambda \geq \max_i (h_1)_i$, then by Corollary~\ref{coro-s1} we know 
that $((\emb h_k)^\lambda , h_k)\in W$ for all $k\in \N$. Therefore, we get
$(z^\lambda,t)=\lim_{k\rightarrow \infty}((\emb{h}_k)^\lambda ,h_k) \in W$, 
because $W$ is closed. Since this holds for all $\lambda \in \rmax$ such that 
$\lambda \geq \max_i (h_1)_i$, it follows that $\emb{t}\geq z \geq s$, 
which contradicts our assumption.  
\end{proof}

As a consequence of the previous results we obtain the separation theorem 
for closed polar cones.

\begin{theorem}\label{TSepPolRmax}
Let $W\subset (\rmax^n)^2$ be a closed polar cone. 
Assume that $s,t\in \rmax^n$ are such that $(s,t)\not \in W$. 
Then, there exists $y\in \rmax^n$ such that 
\[
(f,g)\in W \implies \oplus_i f_i y_i\leq \oplus_j g_j y_j
\]
and 
\[
\oplus_i s_i y_i\not \leq \oplus_j t_j y_j \; .
\]
\end{theorem}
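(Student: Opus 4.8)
The plan is to mimic the proof of Theorem~\ref{th-1}, but using the \emph{truncated} vector $\emb{t}^{\,\beta}$ in place of $\eme{t}$, and then to translate the residuation inequalities into bracket inequalities via reflexivity of $\rmaxb$. More precisely, since $(s,t)\notin W$ and $W$ is a polar cone, we first check that $s\not\leq\emb{t}$: indeed, if $s\leq\emb{t}$, then by Corollary~\ref{coro-s1} (applied with any $\beta\geq\max_i t_i$, say) we would get $(\emb{t}^{\,\beta},t)\in W$, and since $W$ is a polar cone and, for a suitable $\beta$, $s\leq\emb{t}^{\,\beta}$, we would deduce $(s,t)\in W$, a contradiction. (Here one must be a little careful: $s\leq\emb{t}$ together with $s$ finite forces $s\leq\emb{t}^{\,\beta}$ once $\beta$ dominates the finite coordinates of $s$.) So we may assume $s\not\leq\emb{t}$.

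Having $s\not\leq\emb{t}$, invoke Lemma~\ref{propimpor} to produce $h\in\rmax^n$ with \emph{finite} entries such that $t\leq h$ and $s\not\leq\emb{h}$. I would then set $x:=\emb{h}^{\,\beta}\in\rmax^n$ for $\beta$ large enough (dominating the finite coordinates of $h$, of $s$, and whatever else is needed), so that $x$ is a genuine element of $\rmax^n$. By Corollary~\ref{coro-s1}, $(x,h)=(\emb{h}^{\,\beta},h)\in W$. Now Lemma~\ref{lemma-1-Rmax} gives $(f,g)\in W\implies g\backslash\emb{h}\leq f\backslash\emb{h}$; but I actually want the inequalities with $x=\emb{h}^{\,\beta}$ rather than $\emb{h}$. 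The point is that on the coordinates where $\emb{h}$ is finite, $x$ and $\emb{h}$ agree, and on the $+\infty$ coordinates $x$ has a large finite value $\beta$; one checks (as in the first part of the proof of Theorem~\ref{th-1}, using that $(x,h)\in W$ and Lemma~\ref{lemma-1-Rmax} applied carefully, or directly by repeating the residuation computation of Lemma~\ref{lem-1} with $x$ in place of $\emb{h}$ and using $(x,h)\in W$) that $(f,g)\in W\implies g\backslash x\leq f\backslash x$. For the strict part, Lemma~\ref{lemma-2-Rmax} with $t\leq h$ and $s\not\leq\emb{h}$ yields $t\backslash\emb{h}\not\leq s\backslash\emb{h}$, and again one transfers this to $t\backslash x\not\leq s\backslash x$ by taking $\beta$ large enough so that the truncation does not destroy the strict inequality (the offending coordinate witnessing $t\backslash\emb{h}\not\leq s\backslash\emb{h}$ can be kept intact for $\beta$ large).

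Finally, translate residuals into brackets. We work with $\cK=\rmaxb$, $X=\rmaxb^n$, $\varphi=0$, and the bracket $\bracket{y}{x}:=\oplus_i y_i x_i$, for which ${}^-\lambda=\lambda^-=-\lambda$ and, by Proposition~\ref{propXclosed} and the Example following it, $u\backslash x=-(\oplus_i u_i(-x_i))=\bracket{{}^-x}{u}^-$. Set $y:={}^-x=-x\in\rmax^n$ (finite, since $x\in\rmax^n$). Then for any $u\in\rmax^n$, $u\backslash x=\bracket{y}{u}^-=-\bracket{y}{u}$, so $u\backslash x$ is a strictly decreasing (hence injective) function of $\bracket{y}{u}$. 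Consequently $g\backslash x\leq f\backslash x\iff\bracket{y}{f}\leq\bracket{y}{g}$ and $t\backslash x\not\leq s\backslash x\iff\bracket{y}{s}\not\leq\bracket{y}{t}$, i.e.\ $\oplus_i f_i y_i\leq\oplus_j g_j y_j$ for all $(f,g)\in W$ while $\oplus_i s_i y_i\not\leq\oplus_j t_j y_j$, as desired.

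The main obstacle I expect is the bookkeeping around the truncation parameter $\beta$: one has to choose a single $\beta$ (equivalently, a single $h$ via Lemma~\ref{propimpor}) that simultaneously (a) makes $x=\emb{h}^{\,\beta}$ lie in $\rmax^n$, (b) keeps $(x,h)\in W$ (Corollary~\ref{coro-s1}), (c) preserves the order inequalities $g\backslash x\leq f\backslash x$ coming from Lemma~\ref{lemma-1-Rmax} despite replacing $\emb{h}$ by its truncation, and (d) preserves the \emph{strict} inequality $t\backslash x\not\leq s\backslash x$ from Lemma~\ref{lemma-2-Rmax}. Points (c) and (d) are where the argument is genuinely delicate, since truncating $+\infty$ coordinates can a priori change a residual $u\backslash(\cdot)$ by an unbounded amount; the resolution is that Lemma~\ref{propimpor} already hands us an $h$ with finite entries for which $s\not\leq\emb{h}$ strictly, so the strictness is robust, and one then takes $\beta$ large enough (depending on $s,t,h$) to absorb the remaining finite data. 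Everything else is a routine transcription of the complete-case arguments of Section~\ref{SecSepTheo} into the $\rmax$ setting.
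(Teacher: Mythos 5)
Your skeleton matches the paper's proof: the same preliminary claim that $s\not\leq\emb{t}$ (via Corollary~\ref{coro-s1} and the polar cone property), the same appeal to Lemma~\ref{propimpor} to get a finite $h$ with $t\leq h$ and $s\not\leq\emb{h}$, the same use of Lemmas~\ref{lemma-1-Rmax} and~\ref{lemma-2-Rmax}, and the same final translation of residuals into brackets. But your step (c) --- transferring $(f,g)\in W\implies g\backslash\emb{h}\leq f\backslash\emb{h}$ to the truncated vector $x=\emb{h}^{\,\beta}$ --- is false, and no choice of $\beta$ can repair it. The reason is that a single $\beta$ would have to work \emph{uniformly} over all $(f,g)\in W$, while $W$ is a cone containing pairs whose entries in the coordinates where $\emb{h}_i=+\infty$ are arbitrarily large compared to the other entries. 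Concretely, take $n=2$ and $W=V^\circ$ for $V=\rmax\times\{\zero\}$, so that $W=\set{(f,g)}{f_1\leq g_1}$, and take $s=(1,0)^T$, $t=h=(0,0)^T$, whence $\emb{h}=(0,+\infty)^T$ and $x=\emb{h}^{\,\beta}=(0,\beta)^T$, $y=-x=(0,-\beta)^T$. The pair $f=(\zero,\beta+1)^T$, $g=(0,\zero)^T$ lies in $W$, yet $\oplus_i f_iy_i=1\not\leq 0=\oplus_j g_jy_j$. Equivalently, $g\backslash x=0<1=f\backslash x$ fails the required direction. So the truncated vector does not separate.

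The fix --- and this is what the paper does --- is not to truncate at all: take $y:=-\emb{h}$ directly. The theorem only requires $y\in\rmax^n$, i.e.\ entries in $\R\cup\{-\infty\}$; it does not require $y$ to be finite. A $+\infty$ entry of $\emb{h}$ simply becomes a $-\infty$ entry of $y$, which is harmless (that coordinate contributes nothing to either side of $\oplus_i f_iy_i\leq\oplus_j g_jy_j$, exactly mirroring the fact that it imposes no constraint in the residual $u\backslash\emb{h}$). The only thing that must be ruled out is a $-\infty$ entry of $\emb{h}$, which would give $y_i=+\infty\notin\rmax$; this is precisely why Lemma~\ref{propimpor} is invoked to produce $h$ with \emph{finite} entries, since then $\emb{h}\geq h>-\infty$ coordinatewise. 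In other words, the finiteness of $h$ is there to make $-\emb{h}$ land in $\rmax^n$, not to license a truncation of $\emb{h}$. (Your use of truncation in the preliminary claim $s\not\leq\emb{t}$ is fine, because there $\beta$ only needs to accommodate the single fixed pair $(s,t)$; the error is in reusing that device where uniformity over all of $W$ is needed.)
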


\begin{proof}
In the first place, we claim that $s\not \leq \emb{t}$. Indeed, 
if $s\leq \emb{t}$, let $\beta\geq \max_i (s_i\oplus t_i)$. 
Then, $s\leq \emb{t}^\beta$ and by Corollary~\ref{coro-s1} we have 
$(\emb{t}^\beta ,t)\in W$. 
It follows that $(s,t)\in W$ because $W$ is a polar cone, 
which contradicts our assumption. This proves our claim.

Since $s\not \leq \emb{t}$ , according to Lemma~\ref{propimpor} 
there exists $h\in \rmax^n$ 
with finite entries such that $t\leq h$ and $s\not\leq \emb{h}$. 
Therefore, from Lemmas~\ref{lemma-1-Rmax} and~\ref{lemma-2-Rmax}, 
it follows that 
\[
(f,g)\in W \implies g\backslash \emb{h} \leq f\backslash \emb{h} 
\]
and 
\[
t\backslash \emb{h}\not \leq s\backslash \emb{h} \; . 
\]
Then, as $u \backslash \emb{h}=-(\oplus_i u_i (-\emb{h}_i))$ and 
$\emb{h}_i\geq h_i > -\infty$ for all $i$, 
the assertion of the theorem holds with $y:=-\emb{h}$.  
\end{proof}

In order to prove a separation theorem for closed congruences, 
we need the following result.

\begin{lemma}\label{LemmaProp}
Let $W\subset (\rmax^n)^2$ be a closed congruence. Then, 
for all $f,g\in \rmax^n$, the following properties are satisfied. 
\begin{align}\label{p1} 
(f,g)\in W \iff \emb{f} = \emb{g} \; ;\\ 
f\leq \emb{g} \implies \emb{f} \leq \emb{g} \; .\label{p2} 
\end{align}
\end{lemma}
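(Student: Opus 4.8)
The plan is to establish \eqref{p1} first and then to deduce \eqref{p2} from it, using the monotonicity property \eqref{e-barinc-Rmax} and the perturbation construction $z\mapsto z^\beta$ together with Corollary~\ref{coro-s1}.

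For \eqref{p1}, the implication $(f,g)\in W\implies \emb{f}=\emb{g}$ follows directly from the definition of $\emb{\cdot}$: since $W$ is symmetric we also have $(g,f)\in W$, so whenever $(h,f)\in W$ transitivity yields $(h,g)\in W$, hence $h\le\emb{g}$; taking the supremum over all such $h$ gives $\emb{f}\le\emb{g}$, and the reverse inequality holds by symmetry. Note that this step never refers to $(\emb{g},g)$, so the possible $+\infty$ entries of $\emb{g}$ are harmless here. For the converse, I would fix $\beta\in\rmax$ with $\beta\ge\max_i(f_i\oplus g_i)$; then by Corollary~\ref{coro-s1} both $(\emb{f}^\beta,f)\in W$ and $(\emb{g}^\beta,g)\in W$. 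If $\emb{f}=\emb{g}$ then $\emb{f}^\beta=\emb{g}^\beta$, so using that $W$ is a congruence I chain $(f,\emb{f}^\beta)\in W$ with $(\emb{f}^\beta,g)=(\emb{g}^\beta,g)\in W$ to conclude $(f,g)\in W$.

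For \eqref{p2}, assume $f\le\emb{g}$ with $f\in\rmax^n$, and pick $\beta\in\rmax$ with $\beta\ge\max_i(f_i\oplus g_i)$. A short entrywise check shows $f\le\emb{g}^\beta$ (where $\emb{g}_i=+\infty$ one has $\emb{g}^\beta_i=\beta\ge f_i$, and elsewhere $\emb{g}^\beta_i=\emb{g}_i\ge f_i$), and moreover $\emb{g}^\beta\in\rmax^n$. Applying \eqref{e-barinc-Rmax} to the inequality $f\le\emb{g}^\beta$ between elements of $\rmax^n$ gives $\emb{f}\le\emb{\emb{g}^\beta}$. On the other hand $(\emb{g}^\beta,g)\in W$ by Corollary~\ref{coro-s1}, so the already proved half of \eqref{p1} yields $\emb{\emb{g}^\beta}\le\emb{g}$ (alternatively this last inequality is immediate from the same transitivity argument as above). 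Combining the two inequalities gives $\emb{f}\le\emb{g}$.

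The only genuine difficulty is the recurring one of this section: $\emb{g}$ need not belong to $\rmax^n$ and $(\emb{g},g)$ need not lie in $W$, so $\emb{g}$ cannot be manipulated directly inside $W$. The perturbed vector $\emb{g}^\beta$, together with Corollary~\ref{coro-s1}, is precisely the device that circumvents this, after which both assertions reduce to the congruence axioms and to the monotonicity of $\emb{\cdot}$.
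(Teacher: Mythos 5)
Your proof is correct and follows essentially the same route as the paper's: the forward half of \eqref{p1} from symmetry and transitivity of $W$, the converse and \eqref{p2} via the perturbed vector $\emb{g}^\beta$ with $\beta\geq\max_i(f_i\oplus g_i)$ and Corollary~\ref{coro-s1}, and the final step of \eqref{p2} combining \eqref{e-barinc-Rmax} with the already-proved identity $\emb{\emb{g}^\beta}=\emb{g}$. The extra verifications you include (that $f\leq\emb{g}$ implies $f\leq\emb{g}^\beta$, and the explicit chaining in the converse of \eqref{p1}) are exactly the details the paper leaves implicit.
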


\begin{proof}
\eqref{p1} If $(f,g) \in W$, we have  
\[
\set{h\in \rmax^n}{(h,f)\in W}=\set{h\in \rmax^n}{(h,g)\in W}
\]
because $W$ is a congruence. Therefore, $\emb{f}=\emb{g}$. 

Conversely, assume that $\emb{f}=\emb{g}$. 
If $\lambda \geq \max_i(f_i\oplus g_i)$, 
by Corollary~\ref{coro-s1} it follows that $(\emb{f}^\lambda , f) \in W$ and 
$(\emb{g}^\lambda , g)\in W$. Since $\emb{f}=\emb{g}$, 
we have $\emb{f}^\lambda =\emb{g}^\lambda$, 
and thus $(f,g)\in W$ because $W$ is a congruence.
 
\eqref{p2} Let $\lambda \geq \max_i (f_i\oplus g_i)$. Then, 
if $f\leq \emb{g}$ we have $f\leq {\emb{g}}^\lambda$, 
and by Corollary~\ref{coro-s1} we know that 
$({\emb {g}}^\lambda ,g)\in W$. Therefore, 
by~\eqref{e-barinc-Rmax} and~\eqref{p1}, 
it follows that $\emb{f}\leq \wemb{\emb{g}^\lambda}=\emb{g}$. 
\end{proof}

\begin{theorem}\label{TSepConRmax}
Let $W\subset (\rmax^n)^2$ be a closed congruence. Assume that 
$s,t\in \rmax^n$ are such that $(s,t) \not \in W$. Then, there exists 
$y\in \rmax^n$ such that 
\[
(f,g) \in W \implies \oplus_i f_i y_i = \oplus_j g_j y_j
\]
and 
\[
\oplus_i s_i y_i \neq \oplus_j t_j y_j \; .
\]
\end{theorem}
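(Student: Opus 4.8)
The plan is to follow the pattern of the proof of Theorem~\ref{th-2} for complete congruences, and of Theorem~\ref{TSepPolRmax}, replacing the maximal representative $\eme{\cdot}$ of the complete setting by a perturbation built from $\emb{\cdot}$. First I would reduce to a one-sided situation. Since $(s,t)\not\in W$, property~\eqref{p1} of Lemma~\ref{LemmaProp} gives $\emb{s}\neq\emb{t}$. If we had both $s\leq\emb{t}$ and $t\leq\emb{s}$, then property~\eqref{p2} of the same lemma would force $\emb{s}\leq\emb{t}$ and $\emb{t}\leq\emb{s}$, hence $\emb{s}=\emb{t}$, a contradiction. Therefore $s\not\leq\emb{t}$ or $t\not\leq\emb{s}$, and since $W$ is a congruence the two cases are symmetric, so it suffices to treat $s\not\leq\emb{t}$.

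Next, since $s\not\leq\emb{t}$, Lemma~\ref{propimpor} produces a vector $h\in\rmax^n$ with finite entries such that $t\leq h$ and $s\not\leq\emb{h}$. Applying Lemma~\ref{lemma-1-Rmax} to $(f,g)\in W$ and also to $(g,f)\in W$ (which belongs to $W$ because $W$ is a congruence) yields $g\backslash\emb{h}=f\backslash\emb{h}$ for all $(f,g)\in W$, while Lemma~\ref{lemma-2-Rmax}, applied with this $h$ together with $t\leq h$ and $s\not\leq\emb{h}$, gives $t\backslash\emb{h}\not\leq s\backslash\emb{h}$, in particular $s\backslash\emb{h}\neq t\backslash\emb{h}$. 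Finally, because $h$ has finite entries we have $\emb{h}_i\geq h_i>-\infty$ for all $i$, so $y:=-\emb{h}\in\rmax^n$ is well defined and $u\backslash\emb{h}=-(\oplus_i u_i(-\emb{h}_i))=-(\oplus_i u_i y_i)$ for every $u\in\rmax^n$. Since $\lambda\mapsto-\lambda$ is injective on $\rmaxb$, translating the two displayed relations through this identity gives exactly $\oplus_i f_i y_i=\oplus_j g_j y_j$ for all $(f,g)\in W$ and $\oplus_i s_i y_i\neq\oplus_j t_j y_j$, which is the assertion.

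The bulk of the work is already contained in the lemmas above, so no genuinely new obstacle arises here; the two points requiring care are (a) the reduction to the one-sided case, which—unlike the polar-cone situation, where one argues directly from the polar-cone property and Corollary~\ref{coro-s1}—must instead rely on both properties of Lemma~\ref{LemmaProp}, and (b) upgrading the inequality produced by Lemma~\ref{lemma-1-Rmax} to an equality of residuals, which is precisely where the extra symmetry of a congruence is used, mirroring the way Theorem~\ref{th-2} strengthens Theorem~\ref{th-1} in the complete setting.
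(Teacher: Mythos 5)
Your proof is correct and follows essentially the same route as the paper's: both reduce via properties \eqref{p1} and \eqref{p2} of Lemma~\ref{LemmaProp} to the case $s\not\leq\emb{t}$, invoke Lemma~\ref{propimpor} to obtain a finite-entried $h$ with $t\leq h$ and $s\not\leq\emb{h}$, and then conclude from Lemmas~\ref{lemma-1-Rmax} and~\ref{lemma-2-Rmax} with $y:=-\emb{h}$. The only (immaterial) difference is that you phrase the reduction as a contradiction from assuming both $s\leq\emb{t}$ and $t\leq\emb{s}$, whereas the paper argues directly from $\emb{s}\not\leq\emb{t}$ via the contrapositive of \eqref{p2}.
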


\begin{proof}
Since $(s,t)\not \in W$, by~\eqref{p1} we have $\emb{s}\neq \emb{t}$, 
hence, $\emb{s}\not\leq \emb{t}$ and/or $\emb{t}\not\leq \emb{s}$. 
Assume for instance that $\emb{s}\not\leq \emb{t}$. Then, 
from~\eqref{p2} it follows that $s\not\leq \emb{t}$. Therefore, 
according to Lemma~\ref{propimpor} there exists $h\in \rmax^n$ 
with finite entries such that $t\leq h$ and $s\not\leq \emb{h}$. 
Since $(f,g)\in W$ implies $(g,f)\in W$ because $W$ is a congruence, 
from Lemmas~\ref{lemma-1-Rmax} and~\ref{lemma-2-Rmax}, we have 
\[
(f,g) \in W \implies f\backslash \emb{h} = g\backslash \emb{h}
\]
and 
\[
t\backslash \emb{h} \neq s\backslash \emb{h}\; .
\]
Then, taking into account that 
$u \backslash \emb{h}=-(\oplus_i u_i (-\emb{h}_i))$ 
and $\emb{h}_i\geq h_i>-\infty$ for all $i$, 
the assertion of the theorem holds with $y:=-\emb{h}$.     
\end{proof}

Like in the previous section, consider the following 
correspondences between subsemimodules 
of $(\rmax^n)^2$ and $\rmax^n$: 
\begin{align*}
W\subset (\rmax^n)^2\mapsto W^\diamond
:=\set{x\in \rmax^n}{\oplus_i f_i x_i \leq \oplus_j g_j x_j \; ,\; 
\forall (f,g)\in W} \; , \\
V\subset \rmax^n \mapsto V^\circ 
:=\set{(f,g)\in (\rmax^n)^2}{\oplus_i f_i x_i \leq \oplus_j g_j x_j \; , \; 
\forall x\in V} \; ,
\end{align*} 
and 
\begin{align*}
W\subset(\rmax^n)^2\mapsto W^\top 
:=\set{x\in \rmax^n}{\oplus_i f_i x_i =\oplus_j g_j x_j \; , \; 
\forall (f,g)\in W} \; , \\
V\subset \rmax^n \mapsto V^\bot 
:=\set{(f,g)\in (\rmax^n)^2}{\oplus_i f_i x_i =\oplus_j g_j x_j \; , \; 
\forall x\in V} \; .
\end{align*}  
Then, by the separation theorem for closed semimodules 
(see~\cite[Th.~4]{zimmerman77},~\cite{shpiz}, 
see also~\cite[Th.~3.14]{cgqs04} for recent improvements), 
it follows that the following statements are equivalent: 
\begin{enumerate}
\item[(i)] $V\subset \rmax^n$ is a closed semimodule, 
\item[(ii)] $V=(V^\circ )^\diamond $,
\item[(iii)] $V=(V^\bot)^\top$.
\end{enumerate} 
Now, as a consequence of Theorems~\ref{TSepPolRmax} and~\ref{TSepConRmax}, 
we obtain the following dual result. We omit the proof because 
it is similar to that of Theorem~\ref{ThBipolar}.

\begin{theorem}\label{ThBipolarRmax}
A semimodule $W\subset (\rmax^n)^2$ is a closed polar cone 
(resp. closed congruence) if, and only if,  
\begin{align*}
W=(W^\diamond )^\circ \; \makebox{(resp. $W=(W^\top)^\bot $)}\; . 
\end{align*}
\end{theorem}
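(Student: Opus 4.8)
The plan is to mimic the proof of Theorem~\ref{ThBipolar}, replacing the abstract separation theorems there with the concrete ones now available for the max-plus semiring, namely Theorem~\ref{TSepPolRmax} for polar cones and Theorem~\ref{TSepConRmax} for congruences. I shall treat only the polar cone case, the congruence case being entirely parallel with ``$\leq$'' replaced by ``$=$'' and Theorem~\ref{TSepConRmax} used in place of Theorem~\ref{TSepPolRmax}.

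First I would observe that for any subset $V\subset \rmax^n$, the set $V^\circ$ is a closed polar cone in $(\rmax^n)^2$: it is a subsemimodule because each map $x\mapsto \oplus_i f_i x_i$ is max-plus linear; it satisfies condition~\eqref{Def1Polar}, since $f\leq g$ forces $\oplus_i f_i x_i\leq \oplus_j g_j x_j$ pointwise; it is transitive (hence a pre-congruence together with reflexivity) for the obvious reason; and it is closed because it is an intersection of closed sets $\set{(f,g)}{\oplus_i f_i x_i\leq \oplus_j g_j x_j}$ indexed by $x\in V$, each of which is the preimage of the closed set $\{(a,b)\in\rmax^2\mid a\leq b\}$ under a continuous map. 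This establishes the ``if'' direction: whenever $W=(W^\diamond)^\circ$, $W$ has the required form with $V=W^\diamond$.

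For the ``only if'' direction, assume $W$ is a closed polar cone. The inclusion $W\subset (W^\diamond)^\circ$ is immediate from the definitions: if $(f,g)\in W$ then by definition of $W^\diamond$ every $x\in W^\diamond$ satisfies $\oplus_i f_i x_i\leq \oplus_j g_j x_j$, which is exactly the condition for $(f,g)\in (W^\diamond)^\circ$. For the reverse inclusion $(W^\diamond)^\circ\subset W$, I argue by contraposition: suppose $(s,t)\notin W$. By Theorem~\ref{TSepPolRmax} there exists $y\in\rmax^n$ such that $\oplus_i f_i y_i\leq \oplus_j g_j y_j$ for all $(f,g)\in W$, while $\oplus_i s_i y_i\not\leq \oplus_j t_j y_j$. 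The first property says precisely that $y\in W^\diamond$; the second then says that $(s,t)$ fails the defining inequality of $(W^\diamond)^\circ$ at the point $y\in W^\diamond$, so $(s,t)\notin (W^\diamond)^\circ$. Hence $(W^\diamond)^\circ\subset W$, and combined with the trivial inclusion we get $W=(W^\diamond)^\circ$.

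The congruence case runs identically: $V^\bot$ is a closed congruence (it is reflexive, symmetric by symmetry of ``$=$'', transitive, a subsemimodule, and closed as an intersection of preimages of the diagonal in $\rmax^2$), giving ``if''; and for ``only if'' one uses $W\subset (W^\top)^\bot$ by definition together with Theorem~\ref{TSepConRmax} to produce, for each $(s,t)\notin W$, a $y\in\rmax^n$ with $\oplus_i f_i y_i=\oplus_j g_j y_j$ on $W$ (so $y\in W^\top$) but $\oplus_i s_i y_i\neq\oplus_j t_j y_j$ (so $(s,t)\notin (W^\top)^\bot$). I do not anticipate a genuine obstacle here: all the analytic difficulty — the perturbation argument handling the $+\infty$ coordinates of $\emb{t}$ — has already been absorbed into Theorems~\ref{TSepPolRmax} and~\ref{TSepConRmax}, so the remaining work is purely formal bookkeeping, which is exactly why the authors state they omit the proof. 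The only point worth spelling out carefully is the verification that $V^\circ$ (resp.\ $V^\bot$) is \emph{closed}, since that is the condition distinguishing this setting from the complete one.
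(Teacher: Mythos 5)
Your proposal is correct and follows exactly the route the paper intends: the authors omit the proof of Theorem~\ref{ThBipolarRmax} precisely because it repeats the argument of Theorem~\ref{ThBipolar}, with Theorems~\ref{TSepPolRmax} and~\ref{TSepConRmax} supplying the separation step and the observation that $V^\circ$ (resp.\ $V^\bot$) is a closed polar cone (resp.\ closed congruence) supplying the ``if'' direction. Your explicit verification of closedness of $V^\circ$ as an intersection of preimages of $\set{(a,b)\in\rmax^2}{a\leq b}$ under continuous maps is exactly the detail worth recording, and the rest is the same formal bookkeeping as in the complete case.
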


As a corollary of the previous theorem, it follows that 
for any subset $U$ of $(\rmax^n)^2$, 
$(U^\diamond )^\circ $ (resp. $(U^\top)^\bot$) 
is the smallest closed polar cone (resp. 
closed congruence) containing it. 

\begin{figure}
\begin{center}
\begin{tabular}[t]{cc}

\begin{picture}(0,0)%
\includegraphics{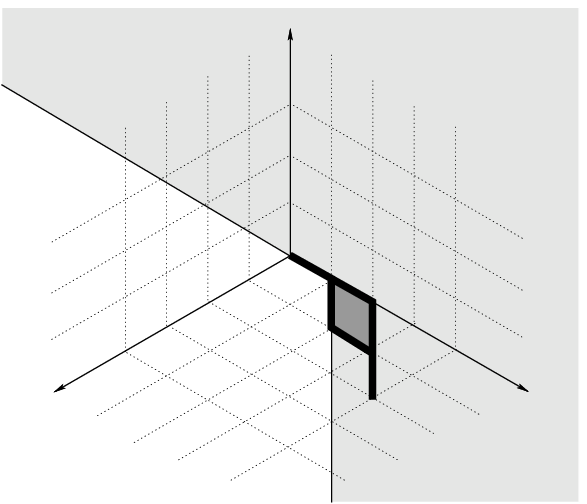}%
\end{picture}%
\setlength{\unitlength}{1302sp}%
\begingroup\makeatletter\ifx\SetFigFont\undefined%
\gdef\SetFigFont#1#2#3#4#5{%
  \reset@font\fontsize{#1}{#2pt}%
  \fontfamily{#3}\fontseries{#4}\fontshape{#5}%
  \selectfont}%
\fi\endgroup%
\begin{picture}(8444,7244)(1779,-7583)
\put(2401,-5611){\makebox(0,0)[lb]{\smash{{\SetFigFont{6}{7.2}{\rmdefault}{\mddefault}{\updefault}{\color[rgb]{0,0,0}$x_1$}%
}}}}
\put(6751,-4936){\makebox(0,0)[lb]{\smash{{\SetFigFont{6}{7.2}{\rmdefault}{\mddefault}{\updefault}{\color[rgb]{0,0,0}$V$}%
}}}}
\put(9226,-5686){\makebox(0,0)[lb]{\smash{{\SetFigFont{6}{7.2}{\rmdefault}{\mddefault}{\updefault}{\color[rgb]{0,0,0}$x_2$}%
}}}}
\put(6526,-1936){\makebox(0,0)[lb]{\smash{{\SetFigFont{6}{7.2}{\rmdefault}{\mddefault}{\updefault}{\color[rgb]{0,0,0}$1x_1\leq x_2\oplus 1x_3$}%
}}}}
\put(5401,-1036){\makebox(0,0)[lb]{\smash{{\SetFigFont{6}{7.2}{\rmdefault}{\mddefault}{\updefault}{\color[rgb]{0,0,0}$x_3$}%
}}}}
\put(6076,-3886){\makebox(0,0)[lb]{\smash{{\SetFigFont{6}{7.2}{\rmdefault}{\mddefault}{\updefault}{\color[rgb]{0,0,0}$a$}%
}}}}
\put(7426,-6136){\makebox(0,0)[lb]{\smash{{\SetFigFont{6}{7.2}{\rmdefault}{\mddefault}{\updefault}{\color[rgb]{0,0,0}$c$}%
}}}}
\put(6151,-5086){\makebox(0,0)[lb]{\smash{{\SetFigFont{6}{7.2}{\rmdefault}{\mddefault}{\updefault}{\color[rgb]{0,0,0}$b$}%
}}}}
\end{picture}%

&

\begin{picture}(0,0)%
\includegraphics{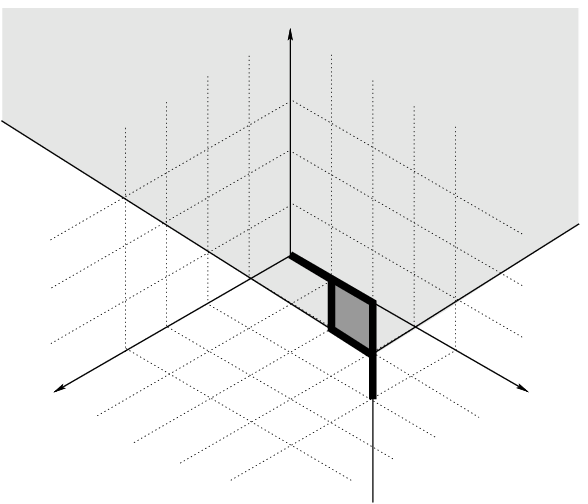}%
\end{picture}%
\setlength{\unitlength}{1302sp}%
\begingroup\makeatletter\ifx\SetFigFont\undefined%
\gdef\SetFigFont#1#2#3#4#5{%
  \reset@font\fontsize{#1}{#2pt}%
  \fontfamily{#3}\fontseries{#4}\fontshape{#5}%
  \selectfont}%
\fi\endgroup%
\begin{picture}(8444,7244)(1779,-7583)
\put(2401,-5611){\makebox(0,0)[lb]{\smash{{\SetFigFont{6}{7.2}{\rmdefault}{\mddefault}{\updefault}{\color[rgb]{0,0,0}$x_1$}%
}}}}
\put(9226,-5686){\makebox(0,0)[lb]{\smash{{\SetFigFont{6}{7.2}{\rmdefault}{\mddefault}{\updefault}{\color[rgb]{0,0,0}$x_2$}%
}}}}
\put(6751,-4936){\makebox(0,0)[lb]{\smash{{\SetFigFont{6}{7.2}{\rmdefault}{\mddefault}{\updefault}{\color[rgb]{0,0,0}$V$}%
}}}}
\put(5401,-1036){\makebox(0,0)[lb]{\smash{{\SetFigFont{6}{7.2}{\rmdefault}{\mddefault}{\updefault}{\color[rgb]{0,0,0}$x_3$}%
}}}}
\put(6151,-1561){\makebox(0,0)[lb]{\smash{{\SetFigFont{6}{7.2}{\rmdefault}{\mddefault}{\updefault}{\color[rgb]{0,0,0}$x_2\oplus 3x_3= 2x_1\oplus 3x_3$}%
}}}}
\put(6076,-3886){\makebox(0,0)[lb]{\smash{{\SetFigFont{6}{7.2}{\rmdefault}{\mddefault}{\updefault}{\color[rgb]{0,0,0}$a$}%
}}}}
\put(7426,-6136){\makebox(0,0)[lb]{\smash{{\SetFigFont{6}{7.2}{\rmdefault}{\mddefault}{\updefault}{\color[rgb]{0,0,0}$c$}%
}}}}
\put(6151,-5161){\makebox(0,0)[lb]{\smash{{\SetFigFont{6}{7.2}{\rmdefault}{\mddefault}{\updefault}{\color[rgb]{0,0,0}$b$}%
}}}}
\end{picture}%

\end{tabular} 
\end{center}
\caption{Valid linear inequality and equality for $V$.}
\label{figurevalid} 
\end{figure}

\begin{example}\label{ejemplofinal}
Consider the semimodule $V\subset \rmax^3$ generated by the vectors 
$a=(0,0,0)^T$, $b=(0,1,-1)^T$ and $c=(0,2,-2)^T$.  
This semimodule is represented by the bounded dark gray region of 
Figure~\ref{figurevalid} together with the segments joining the points 
$a$ and $c$ to it. Then $V^\circ$ is the solution set of a system  
of homogeneous max-plus linear inequalities, and hence also of 
a system of homogeneous max-plus linear equations. More precisely, 
\begin{align*}
V^\circ & = 
\set{(f,g)\in (\rmax^3)^2}{\oplus_i f_i x_i \leq \oplus_j g_j x_j \; , 
\; x=a,b,c} \\
 &=\set{(f,g)\in (\rmax^3)^2}{\oplus_i (f_i\oplus g_i) x_i = 
\oplus_j g_j x_j\; , \; x=a,b,c} \; . 
\end{align*}
We know that it is a finitely generated subsemimodule of $\rmax^6$ 
(see~\cite{butkovicH,gaubert92a,maxplus97}). Solving this system by the 
elimination method (see~\cite{butkovicH} and~\cite{AGG08} for recent 
improvements), we obtain that the polar cone $V^\circ $ is the 
subsemimodule of $\rmax^6$ generated by the columns of the following matrix 
\[
\left(
\begin{array}{cccccccccccccc}
\zero & \zero & \zero &   1   & \zero &   2   &   0   &   0   &   0   & \zero & \zero & \zero & \zero & \zero \cr
  0   & \zero & \zero & \zero &   0   & \zero & \zero & \zero & \zero &   0   & \zero & \zero & \zero & \zero \cr
\zero &   0   &   0   & \zero & \zero & \zero & \zero & \zero & \zero & \zero &   0   & \zero & \zero & \zero \cr
  2   & \zero &   0   & \zero & \zero & \zero & \zero & \zero &   0   & \zero & \zero &   0   & \zero & \zero \cr
\zero &   0   & \zero &   0   & \zero &   0   &   0   & \zero & \zero &   0   & \zero & \zero &   0   & \zero \cr
\zero & \zero & \zero &   1   &   4   &   3   & \zero &   2   & \zero & \zero &   0   & \zero & \zero &   0
\end{array} 
\right) \; .
\]
The ordering of the variables in the system of equations above has been chosen 
such that, if we denote by $f$ the vector whose entries are the first three 
entries of a solution and by $g$ the vector whose entries are the remaining 
three entries, then $(f,g)\in V^\circ$. For instance, 
the fourth column of the matrix above corresponds to the pair of vectors 
$f=(1, \zero , \zero)^T$ and $g=(\zero , 0 ,1)^T$. 
With each pair of vectors $(f,g)\in V^\circ$ is 
associated the linear inequality $\oplus_i f_i x_i \leq \oplus_j g_j x_j$ 
which is said to be {\em valid} for $V$ because it is satisfied by all its 
elements. For the previous pair of vectors, we have the valid inequality 
$1x_1\leq x_2 \oplus 1x_3$, which is represented by the unbounded 
light gray region on the left-hand side of Figure~\ref{figurevalid}. 
Analogously, the congruence $V^\bot$ is the subsemimodule of 
$\rmax^6$ generated by the columns of the following matrix
\[
\left(
\begin{array}{ccccccccccccccccccc}
  0   &   1   & \zero &   0   &   2   & \zero & \zero & \zero & \zero & \zero & \zero & \zero &   2   &   2   &   0   &   0   &   0   & \zero & \zero \cr
  0   &   0   &   0   & \zero & \zero &   0   &   0   &   0   &   0   & \zero &   0   & \zero &   0   & \zero & \zero & \zero & \zero &   0   & \zero \cr
\zero & \zero &   0   &   0   & \zero &   4   &   3   & \zero & \zero &   2   &   1   &   4   & \zero &   3   & \zero &   2   & \zero & \zero &   0   \cr
\zero & \zero & \zero &   0   &   2   & \zero &   2   &   0   & \zero &   0   &   1   & \zero &   2   & \zero &   0   & \zero &   0   & \zero & \zero \cr
  0   &   0   &   0   & \zero &   0   & \zero & \zero &   0   &   0   & \zero &   0   &   0   & \zero &   0   & \zero & \zero & \zero &   0   & \zero \cr
\zero &   1   & \zero & \zero & \zero &   4   &   3   & \zero &   0   &   2   & \zero &   4   & \zero &   3   &   0   &   2   & \zero & \zero &   0
\end{array} 
\right) \; .
\]
In this case, to solve the system of equations for $V^\bot$, 
we have taken the same ordering of the variables as for $V^\circ $. 
Then, for example, the first column corresponds to the equality 
$x_1\oplus x_2 =x_2 $ which  is equivalent to the valid inequality 
$x_1\leq x_2$ (the seventh column of the matrix containing the 
generators of $V^\circ $). 

Note that, as mentioned in the introduction, 
any valid inequality $\oplus_i f_i x_i \leq \oplus_j g_j x_j$ 
for $V$ implies a valid equality for $V$ 
(meaning that it is satisfied by all its elements), 
namely $\oplus_i (f_i \oplus g_i ) x_i= \oplus_j g_j x_j$. 
However, not all the valid equalities for $V$ can be obtained in this way. 
For instance, the equality $x_2\oplus 3x_3=2x_1\oplus3 x_3$ is 
satisfied by all the elements of $V$.  It corresponds to the seventh column 
of the latter matrix and it is represented by the unbounded light gray 
region on the right-hand side of Figure~\ref{figurevalid}. 
This equality cannot be obtained as a max-plus linear combination of the 
equalities that can be derived from the inequalities in $V^\circ $. 
\end{example}

\paragraph{Acknowledgment}
The authors would like to thank the anonymous referees for 
their valuable comments and suggestions, 
which improved this manuscript.

\newcommand{\etalchar}[1]{$^{#1}$}

\end{document}